\newenvironment{customthm}[1]
{\innercustomthm}
{\endinnercustomthm}
\newtheorem*{theorem*}{Theorem}
\newtheorem{theorem}{Theorem}[section]
\newtheorem{lemma}[theorem]{Lemma}
\newtheorem{prop}[theorem]{Proposition}
\newtheorem{cor}[theorem]{Corollary}
\theoremstyle{definition}
\newtheorem{defn}[theorem]{Definition}
\newtheorem{ass}[theorem]{Assumption}
\theoremstyle{remark}
\newtheorem{rem}[theorem]{Remark}
\newcommand{\cal}{\mathcal}
\newcommand{\A}[2]{\mathbb{A}_{#1}^{#2}}
\newcommand{\Q}{\mathbb{Q}}
\newcommand{\Z}{\mathbb{Z}}
\newcommand{\C}{\mathbb{C}}
\newcommand{\F}[1]{\mathbb{F}_{#1}}
\newcommand{\GL}{\operatorname{GL}}
\newcommand{\Gm}{\mathbb{G}_m}
\newcommand{\Sh}[1]{\mathcal{#1}}
\newcommand{\sh}[1]{\mathscr{#1}}
\newcommand{\pp}{\mathfrak{p}}
\newcommand{\Limp}{L^{\mathrm{imp}}_{\mathrm{As}}}
\newcommand{\RG}[1]{\mathrm{R}\Gamma_{#1}}
\newcommand{\et}{\mathrm{\acute{e}t}}
\newcommand{\proet}{\mathrm{pro\acute{e}t}}
\newcommand{\AF}{\mathrm{AF}^{j}_{\et}}
\newcommand{\CG}{\mathrm{CG}}
\newcommand{\rig}{\mathrm{rig}}
\newcommand{\syn}{\mathrm{syn}}
\newcommand{\Eis}{\Sh{E}is}
\newcommand{\Iw}{\mathrm{Iw}}
\newcommand{\st}{\mathrm{st}}
\newcommand{\calF}{\mathcal{F}}
\newcommand{\calG}{\mathcal{G}}
\newcommand{\BGG}{\mathrm{BGG}}
\DeclareMathOperator{\Spec}{Spec}
\DeclareMathOperator{\Res}{Res}
\DeclareMathOperator{\Sym}{Sym}
\DeclareMathOperator{\Fil}{Fil}
\DeclareMathOperator{\Gr}{Gr}
\DeclareMathOperator{\can}{can}
\DeclareMathOperator{\id}{id}
\DeclareMathOperator{\dR}{dR}
\DeclareMathOperator{\HK}{HK}
\DeclareMathOperator{\coh}{coh}
\DeclareMathOperator{\BK}{BK}
\DeclareMathOperator{\Ext}{Ext}
\DeclareMathOperator{\Tr}{Tr}
\DeclareMathOperator{\As}{As}
\DeclareMathOperator{\St}{St}
\DeclareMathOperator{\Cone}{Cone}
\DeclareMathOperator{\ord}{ord}
\DeclareMathOperator{\cris}{cris}
\DeclareMathOperator{\MF}{MF}
\DeclareMathOperator{\Gal}{Gal}
\begin{document}

\title{New perspectives on $p$-adic regulator formulae}

\author{Ting-Han Huang}
\author{Ananyo Kazi}
\author{Luca Marannino}

\date{\today}

\begin{abstract}
Inspired by the pullback method in the recent work of Sangiovanni-Vincentelli--Skinner, we reconstruct the diagonal class of Darmon--Rotger.
Moreover, we reinterpret the computation of the $p$-adic regulator formula.

\end{abstract}

\maketitle

\numberwithin{equation}{section}
\setcounter{tocdepth}{1}
\tableofcontents

\section{Introduction}
Over the last decade, Euler systems (or, more generally, $p$-adic families of cohomology classes) have been used to obtain instances of the Bloch--Kato conjecture for $p$-adic Galois representations of automorphic origin. A typical way to obtain results for the Bloch--Kato conjecture in cases of rank $0$ consists in relating the relevant $L$-value to the Bloch--Kato exponential of the (localisation at $p$ of the) bottom class of an Euler system. Combining this relation with the power of Euler systems, one obtains the proof that the non-vanishing of the $L$-value implies the finiteness of the Bloch--Kato Selmer group.

A successful strategy to obtain this relation -- first appearing in the celebrated work \cite{BDP} -- consists of the following three steps:
\begin{itemize}
    \item construct a $p$-adic $L$-function having the relevant $L$-value in its range of interpolation;
    \item prove a $p$-adic regulator formula relating crystalline classes with values of the $p$-adic $L$-function \emph{outside} the interpolation range;
    \item deform the Euler system and the regulator formula $p$-adically to obtain the desired identity \emph{within} the interpolation range.
\end{itemize}

In this work, we focus on the second step, with the aim of developing new tools and simplifying the existing machinery. We exhibit our techniques in the case of \emph{diagonal classes/triple product $p$-adic $L$-functions}.
Let us clarify from the outset that the novelty of this article does not lie in the results, but rather in their proofs. 
We believe that some of the ideas could be easily adapted to cover other known (or less known) examples. 

\subsection{A brief history}
As mentioned in the beginning, the Galois representations of interest are attached to automorphic forms that moreover can be realised in the cohomology of Shimura varieties. The crystalline classes, whose Bloch--Kato logarithms one intends to interpret via $p$-adic $L$-values typically arise from pushforward of Galois invariant classes along sub-Shimura varieties. The pushforward construction is also motivated by the fact that in such cases, the critical $L$-values can be interpreted as Ichino--Ikeda or Rankin--Selberg type period integrals over spherical pairs $(G,H)$ of reductive groups. The following are instances of $p$-adic regulator formulae via pushforward.
\begin{itemize}
    \item \cite{BDP} (generalised Heegner cycles) -- for $\Res_{K/\Q}\Gm \xhookrightarrow{} \GL_2$ with $K$ quadratic imaginary.
    \item \cite{BDR1}, \cite{KLZ1} (Beilinson--Flach classes) -- for $\GL_2 \xhookrightarrow{} \GL_2 \times \GL_2$.
    \item \cite{DR_diagonal_1}, \cite{BSV_reciprocity_balanced} (diagonal classes) -- for $\GL_2 \xhookrightarrow{} \GL_2 \times \GL_2 \times \GL_2$.
    \item \cite{loeffler2024blochkatoconjecturegsp4} (Lemma--Flach classes) -- for $\GL_2 \times_{\Gm} \GL_2 \xhookrightarrow{} \mathrm{GSp}_4$.
    \item \cite{grossi2025asaiflachclassespadiclfunctions} (Asai--Flach classes) -- for $\GL_2 \xhookrightarrow{} \Res_{F/\Q}\GL_{2}$ with $F$ quadratic totally real.    
\end{itemize}
The key tool in proving the $p$-adic regulator formula in these cases is a generalisation of Coleman's integration theory due to Besser \cite{Besser_inventiones} using syntomic cohomology. Let us briefly explain what role syntomic cohomology plays in these proofs.
\begin{enumerate}
    \item The Galois cohomology classes are constructed via generalisations of the Abel--Jacobi map. For a spherical pair $(G,H)$, corresponding to a Clebsch--Gordan map $V_H \to {V_G}_{|H}$ of irreducible representations, one considers pushforward in absolute \'etale cohomology
    \[
    H^i_{\et}(Y_H, V_H) \to H^{i+2d}_{\et}(Y_G, V_G(d)),
    \]
    where $Y_H \xhookrightarrow{} Y_G$ are Shimura varieties for $H$ and $G$ with the embedding being of codimension $d$. For some special class $\xi \in H^i_{\et}(Y_H, V_H)$ which is usually motivic, if one can show that the image of $\xi$ vanishes under the projection to $H^0(\Q, H^{i+2d}_{\et}(Y_{G,\bar{\Q}}, V_G(d)))$, then the edge map of Hochschild--Serre spectral sequence produces a class $\mathrm{AJ}(\xi) \in H^1(\Q, H^{i+2d-1}_{\et}(Y_{G,\bar{\Q}}, V_G(d)))$. Syntomic cohomology is used to prove that the local class at $p$, denoted by $\mathrm{AJ}_p(\xi)$, lies in the Bloch--Kato finite Selmer group $H^1_f(\Q_p, -)$. This is achieved by a deep theorem (due to Nizio\l{}, Nekov\'{a}\v{r}--Nizio\l{}, etc.) that shows that the syntomic descent spectral sequence is compatible with the Hochschild--Serre spectral sequence.
    \item Once it is proved that $\mathrm{AJ}_p(\xi) \in H^1_f$, one can study its image under the (twisted) Bloch--Kato logarithm map $\widetilde{\log}_{\BK}\colon H^1_f(\Q_p, V) \to D_{\dR}(V)/(1-\varphi)\Fil^0D_{\dR}(V)$ (assuming $V$ is crystalline for simplicity). Under the assumption that $(1-\varphi)$ is invertible, one gets an element in $D_{\dR}(V)/\Fil^0D_{\dR}(V)$, which we can call $\log_{\BK}(\mathrm{AJ}_p(\xi))$. The pairing of this element with a carefully chosen de Rham class in $\Fil^1D_{\dR}(V^*)$ constructed from relevant modular forms is what one wants to relate to a $p$-adic $L$-value. The generalisation of syntomic cohomology to finite polynomial (fp) cohomology due to Besser allows one to express this pairing in terms of cup products of rigid/coherent classes attached to overconvergent modular forms. Let us recall that
    classes in fp-cohomology can be interpreted as generalised Coleman's $p$-adic integrals, and there is an explicit formula for cup products of fp-cohomology classes. These two facts are crucial for proving the regulator formula.
\end{enumerate}

Recently, Marco Sangiovanni Vincentelli and Christopher Skinner \cite{SV-S} has proposed a new method of constructing Euler system classes. Their method relies on their fascinating construction of Eisenstein classes on orthogonal Shimura varieties. Using this they construct Euler system classes for the adjoint motive of an elliptic newform. Notably, their construction of the cohomology classes is ``dual" to the pushforward picture painted above. Indeed, they rely on pulling back their Eisenstein class along the embedding of Shimura varieties for $\mathrm{SO}(2,2) \xhookrightarrow{} \mathrm{SO}(3,2)$, and the extension classes naturally arise from the relative cohomology sequence associated with this embedding. The proof of the regulator formula is notably simpler owing to the pullback nature of the construction -- it boils down to computing the Coleman primitive of their Eisenstein series and pulling it back to the $\mathrm{SO}(2,2)$ Shimura variety.

\subsection{Main results}
Diagonal classes were originally defined in \cite{DR_diagonal_1} by studying Abel--Jacobi images of diagonal cycles inside Kuga--Sato varieties. Later in \cite{BSV_reciprocity_balanced} the authors gave a neat definition by pushing forward a Galois invariant class from the cohomology of a modular curve $Y$ to the cohomology of $Y^3$ with coefficients in certain algebraic representations of $\GL_2$. Indeed, for $f,g,h$ a self-dual triple of modular forms of weight $k+2,l+2,m+2$ such that the triple of weights form the sides of a triangle (i.e., they are \emph{balanced}), one can consider the Clebsch--Gordan map $\Q_p \to V^k \otimes V^l \otimes V^m(\frac{k+l+m}{2})$, where $V^k = \Sym^k\St^{\vee}$ is the symmetric power of the dual of the standard 2-dimensional representation of $\GL_2$. Then this Clebsch--Gordan map induces a pushforward map on \'etale cohomology
\[
H^0_{\et}(Y, \Q_p) \to H^4_{\et}(Y^3, V^k \boxtimes V^l \boxtimes V^m(\tfrac{k+l+m+4}{2})).
\]
For brevity, write $r = \frac{k+l+m}{2}$ (which is an integer, thanks to the self-duality assumption), $V^{[k,l,m]} = V^k \boxtimes V^l \boxtimes V^m$. The diagonal class is then constructed by pushing forward the trivial class in $H^0_{\et}(Y, \Q_p)$ and taking its image under
\[
H^0_{\et}(Y, \Q_p) \to H^4_{\et}(Y^3, V^{[k,l,m]}(r+2)) \xrightarrow{\mathrm{HS}} H^1(\Q, H^3_{\et}(Y^3_{\bar{\Q}}, V^{[k,l,m]}(r+2)) \to H^1(\Q, V^*_{fgh}(-r-1))
\]
where $V^*_{fgh}$ is the tensor product of the dual Deligne Galois representations attached to $f,g,h$ that appears naturally as a quotient of the geometric \'etale cohomology group above. The second map in the above sequence is the edge map in the Hochschild--Serre spectral sequence.

Inspired by the method of \cite{SV-S}, we give a pullback reconstruction of the diagonal class. Let $E_{k(f,g,h)}$ be the extension class 
\[
0 \to V^*_{fgh}(-r-1) \to E_{k(f,g,h)} \to \Q_p \to 0
\]
defining the diagonal class. View the diagonal class as the equivalent extension
\[
0 \to V^*(f)(-r-1) \to \tilde{E}_{k(f,g,h)} \to V(g) \otimes V(h) \to 0.
\]
\begin{customthm}{A}[Proposition \ref{proposition: diagonal pullback}]
\label{thm B - intro}
    The extension class $\tilde{E}_{k(f,g,h)}$ appears in the mapping fibre long exact sequence of the Clebsch--Gordan pullback $\RG{\et,c}(Y^2_{\bar{\Q}}, V^{[l,m]}) \xrightarrow{\CG^*_{r-k}} \RG{\et}(Y_{\bar{\Q}}, V^k(k-r))$ as follows.
    \[\begin{tikzcd}
	0 & {H^1_{\et}(Y_{\bar{\Q}}, V^k(k-r))} & {H^2(\MF(\CG^*_{r-k}))} & {H^2_{\et,c}(Y^2_{\bar{\Q}}, V^{[l,m]})} & 0 \\
	0 & {V^*(f)(-r-1)} & {\tilde{E}_{k(f,g,h)}} & {V(g) \otimes V(h)} & 0
	\arrow[from=1-1, to=1-2]
	\arrow[from=1-2, to=1-3]
	\arrow[two heads, from=1-2, to=2-2]
	\arrow[from=1-3, to=1-4]
	\arrow[from=1-4, to=1-5]
	\arrow[from=2-1, to=2-2]
	\arrow[from=2-2, to=2-3]
	\arrow[from=2-3, to=2-4]
	\arrow[hook, from=2-4, to=1-4]
	\arrow[from=2-4, to=2-5]
\end{tikzcd}\]
    That is, we pullback the top row by the inclusion on the right and pushout by the surjection on the left to construct the diagonal class as an extension class.
\end{customthm}
With this interpretation, the proof of the regulator formula relating the central critical value of the $f$-unbalanced $p$-adic $L$-function to the logarithm of the diagonal class becomes more transparent. We take a rigid primitive over the dagger ordinary locus $X^{\ord,2,\dagger}$ of the differential 2-form associated with $g \times h$ (after annihilating the class with a suitable Frobenius polynomial), pull it back to $X^{\ord}$, apply the Clebsch--Gordan projection, and pair it with the \emph{anti-holomorphic} rigid differential associated with $f$. This, by definition of the $p$-adic $L$-function, coincides with the $p$-adic $L$-value. We refer to Theorem \ref{theorem: diagonal regulator} for a precise statement.

\begin{rem}
    We point out that in his upcoming PhD thesis, Fernando Trejos Su\'arez will provide a pullback construction of a full cyclotomic Euler system of diagonal classes.
\end{rem}

\begin{rem}
In this work, we focused on diagonal classes and triple product $p$-adic $L$-functions. It is worth mentioning that our techniques can be applied to obtain a pullback interpretation of Hirzebruch--Zagier type classes (replacing $Y^2$ with a Hilbert modular surface and $(g,h)$ with a Hilbert cusp form) or Beilinson--Flach classes (replacing $g$ or $h$ with a suitable Eisenstein series).
\end{rem}

\subsection{Organization of the paper}
The material is organized as follows. Section \ref{section-homalg} introduces the tools from homological algebra that allow to reinterpret the construction of diagonal classes. Section \ref{section: onBKlog} gives abstract formulae for the Bloch--Kato logarithm of crystalline classes. Section \ref{S4} introduces the notation for modular curves and explains the pullback reconstruction.
Section \ref{S5} contains the proof of the regulator formula for diagonal classes, cf. Theorem \ref{theorem: diagonal regulator}. Finally, Section \ref{S6} reinterprets the computations of Section \ref{S5} in terms of syntomic complexes.

\subsection{Acknowledgements}
We thank David Loeffler for inviting  T.-H.H. and L.M. to Brig in March 2025, as well as for his constant and valuable support throughout the preparation of this article.
We thank Andrew Graham and Marco Seveso for helpful discussions on a first draft of this work.

T.-H.H. is supported by the grant G5392391GRA3C565LAGAXR from the ANR-DFG Project HEGAL as a postdoctoral researcher in LAGA, Universit\'{e} Paris XIII.
A.K. is supported by the European Research Council through the Horizon 2020 Excellent Science programme (Consolidator Grant ``Shimura varieties and the BSD conjecture'', grant ID 101001051) as a postdoctoral researcher at UniDistance Suisse.
L.M. was funded by the Simons Collaboration on Perfection in Algebra, Geometry and Topology as postdoctoral researcher at CNRS when most of this work was carried out. He is currently funded by the Progetto di Eccellenza U-GOV DECC23\_012\_AC as a postdoctoral researcher at Università degli Studi di Milano.

\section{Some homological algebra}
\label{section-homalg}

We start with an abstract result which boils down to a careful study of edge maps in Grothendieck spectral sequences.

Let $\cal{A}, \cal{B}, \cal{C}$ be abelian categories with enough injectives, together with additive left exact functors $G \colon \cal{A} \to \cal{B}$ and $F \colon \cal{B} \to \cal{C}$ such that $G$ sends injective objects of $\cal{A}$ to $F$-acyclic objects of $\cal{B}$. Let $X, Y$ be bounded cochain complexes in $\cal{A}$, and $f \colon X \to Y$ be a map of complexes. 

Let $x \in R^q(FG)(X)$ for some $q > 0$, and let $y$ be the image of $x$ under the map $FG(f) \colon R^q(FG)(X) \to R^q(FG)(Y)$.
Assume that the image of $y$ under the edge map $R^q(FG)(Y) \to F(R^qG(Y))$ is $0$, i.e., $y \in \Fil^1R^q(FG)(Y)$. Letting $Z := \ker((R^qG(X)) \to (R^qG(Y)))$, this implies that the image of $x$, denoted $\tilde{x}$, under the edge map $R^q(FG)(X) \to F(R^qG(X))$ belongs to $F(Z)$. Let $\mathbf{y}$ be the image of $y$ under the following map.
\[
\Fil^1R^q(FG)(Y) \to R^1F(R^{q-1}G(Y)) \to R^1F\left(\frac{R^{q-1}G(Y)}{R^{q-1}G(X)} \right).
\]

Consider now the distinguished triangle 
\[
RG(Y)[-1] \to \Cone{RG(f)}[-1] \to RG(X) \xrightarrow{+1}
\]
and the following short exact sequence obtained from the long exact sequence of cohomology 
\[
0 \to \frac{R^{q-1}G(Y)}{R^{q-1}G(X)} \to R^{q-1}(\Cone{RG(f))} \to Z \to 0.
\]

\begin{lemma}\label{L201}
    The image of $\tilde{x} \in F(Z)$ under the connecting map of the long exact sequence
    \[
    0 \to F\left( \frac{R^{q-1}G(Y)}{R^{q-1}G(X)} \right) \to F\left( R^{q-1}(\Cone{RG(f))} \right) \to F(Z) \to R^1F\left(\frac{R^{q-1}G(Y)}{R^{q-1}G(X)}\right) \to \cdots
    \]
    coincides with $\mathbf{y}$.
\end{lemma}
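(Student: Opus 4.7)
\emph{Plan.} The strategy is to reinterpret both $\delta(\tilde x)$ and $\mathbf{y}$ as images of $x$ under the boundary of one and the same distinguished triangle in $D(\cal C)$, obtained from the cone triangle and the good truncation triangles via the $3 \times 3$-lemma for triangulated categories (a consequence of the octahedral axiom). Start by choosing injective resolutions $X \to I^\bullet$, $Y \to J^\bullet$ together with a lift $\tilde f \colon I^\bullet \to J^\bullet$ of $f$. Since $G$ sends injectives to $F$-acyclics, $B^\bullet := G(I^\bullet)$, $C^\bullet := G(J^\bullet)$ and $M^\bullet := \Cone(G(\tilde f))$ are bounded-below complexes of $F$-acyclics modelling $RG(X)$, $RG(Y)$, $\Cone(RG(f))$ in $D(\cal B)$, and applying $F$ termwise computes the corresponding $RF$. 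Writing $K := R^{q-1}G(Y)/R^{q-1}G(X)$, the SES $0 \to K \to H^{q-1}(M^\bullet) \to Z \to 0$ of the lemma is the degree $(q{-}1)$ piece of the cohomology long exact sequence of the cone.

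Applying the $3 \times 3$-lemma to the map of truncation triangles for $B^\bullet$ and $C^\bullet$ induced by $G(\tilde f)$ yields a distinguished triangle
\[
\Cone(\tau_{\leq q-1}G(\tilde f)) \to M^\bullet \to \Cone(\tau_{\geq q}G(\tilde f)) \xrightarrow{+1}
\]
in $D(\cal B)$, whose cohomology long exact sequence in degree $q-1$ recovers precisely the SES $0 \to K \to H^{q-1}(M^\bullet) \to Z \to 0$. A direct analysis of the Grothendieck spectral sequences for the two cones identifies $R^{q-1}F(\Cone(\tau_{\geq q}G(\tilde f))) \cong F(Z)$ via the edge map (all other $E_2$-terms vanish in this total degree, for support reasons) and produces a canonical map $R^qF(\Cone(\tau_{\leq q-1}G(\tilde f))) \to R^1F(K)$ (its image lies in $E_\infty^{1,q-1} \subseteq E_2^{1,q-1} = R^1F(K)$, since $E_2^{0,q}$ vanishes).

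The proof is completed by a diagram chase. By hypothesis, the standard edge image $\tilde x \in F(R^qG(X))$ of $x$ lies in $F(Z)$; lifting it along the edge isomorphism $F(Z) \cong R^{q-1}F(\Cone(\tau_{\geq q}G(\tilde f)))$ and applying the boundary of the displayed triangle, we obtain a class in $R^qF(\Cone(\tau_{\leq q-1}G(\tilde f)))$. Its image in $R^1F(K)$ equals $\delta(\tilde x)$: this comes from the general fact that for any distinguished triangle $A \to B \to C \xrightarrow{+1}$ in $D(\cal B)$ inducing a SES in some cohomological degree, the boundary map of the triangle after applying $RF$ and composing with the relevant edges coincides with the connecting homomorphism of the induced SES after applying $F$ (a standard compatibility proven, e.g., by representing everything in a Cartan--Eilenberg bicomplex). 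On the other hand, the same image equals $\mathbf{y}$ by naturality with respect to the map $M^\bullet \to \Cone(\tau_{\geq q}G(\tilde f))$ and the standard truncation-triangle description of $\Fil^1 R^q(FG)(Y) \to R^1F(R^{q-1}G(Y))$: namely, the Grothendieck edge class of $y$ arises from the same kind of boundary computation applied to the $C^\bullet$-side truncation triangle, which is itself part of the $3\times 3$ diagram built above.

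The main obstacle is the bookkeeping required to match the $3 \times 3$ diagram with the Grothendieck spectral sequence filtrations on both the $X$ and the $Y$ sides and with the SES connecting map. Once these identifications are fixed, the equality $\delta(\tilde x) = \mathbf{y}$ reduces to the formal naturality of boundary maps in triangulated categories, or equivalently to an explicit cocycle comparison in a Cartan--Eilenberg bicomplex.
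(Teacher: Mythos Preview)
The paper does not give its own proof; it simply cites \cite[Lemma 9.5]{Jannsen_mixed_motives}, whose argument is a direct cocycle computation in a Cartan--Eilenberg resolution. Your proposal is a more derived-categorical repackaging via the $3\times 3$-lemma, which is a legitimate alternative viewpoint, and your identifications $R^{q-1}F\big(\Cone(\tau_{\geq q}G(\tilde f))\big)\cong F(Z)$ and the edge map $R^qF\big(\Cone(\tau_{\leq q-1}G(\tilde f))\big)\to R^1F(K)$ are correct for the reasons you give.

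The gap is in the second half of your diagram chase. You assert that the image of the boundary class $\partial(\hat x)$ in $R^1F(K)$ equals $\mathbf{y}$ ``by naturality with respect to the map $M^\bullet\to\Cone(\tau_{\geq q}G(\tilde f))$''. But naturality with respect to that single arrow does not by itself link $\hat x\in R^{q-1}F(C')$ (built from the edge of $x$ on the $B^\bullet$-side) to the unique lift $\hat y\in R^qF(\tau_{\leq q-1}C^\bullet)$ of $y$ (built on the $C^\bullet$-side). Concretely, what one needs is that $\partial(\hat x)$ and the image $\hat y'$ of $\hat y$ in $R^qF(A')$ agree, and this requires tracing through \emph{several} squares of the $3\times 3$ diagram: the row-3 boundary identifying $\hat x$ with $\tilde x$, the (anti-)commuting corner square relating the row and column boundaries, and the exactness of column~1 to see that $\partial_{\text{col 1}}(\tilde x)=0$ because $\tilde x$ comes from $x\in R^qF(B^\bullet)$. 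None of this is hard, but it is not the one-line naturality you invoke. Your closing remark that one can ``equivalently'' carry out an explicit cocycle comparison in a Cartan--Eilenberg bicomplex is precisely Jannsen's proof, so as written the substantive step is ultimately deferred to the same source the paper cites.
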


\begin{proof}
    \cite[Lemma 9.5]{Jannsen_mixed_motives}.
\end{proof}

\begin{lemma}\label{L202}
    Let $D(\Sh{A})$ be the derived category of an abelian category $\Sh{A}$ as above, and consider the following diagram in $D(\Sh{A})$
    \begin{equation}\label{E206}
    \begin{tikzcd}[column sep = scriptsize]
	X & Y & Z & {} \\
	{X'} & {Y'} & {Z'} & {} \\
	{X''} & {Y''} & {Z''} & {} \\
	{} & {} & {}
	\arrow[from=1-1, to=1-2]
	\arrow[from=1-1, to=2-1]
	\arrow[from=1-2, to=1-3]
	\arrow[from=1-2, to=2-2]
	\arrow["{+1}", from=1-3, to=1-4]
	\arrow[from=1-3, to=2-3]
	\arrow[from=2-1, to=2-2]
	\arrow[from=2-1, to=3-1]
	\arrow[from=2-2, to=2-3]
	\arrow[from=2-2, to=3-2]
	\arrow["{+1}", from=2-3, to=2-4]
	\arrow[from=2-3, to=3-3]
	\arrow[from=3-1, to=3-2]
	\arrow["{+1}"', from=3-1, to=4-1]
	\arrow[from=3-2, to=3-3]
	\arrow["{+1}"', from=3-2, to=4-2]
	\arrow["{+1}", from=3-3, to=3-4]
	\arrow["{+1}"', from=3-3, to=4-3]
    \end{tikzcd} 
    \end{equation}
    that is constructed out of the top left square by following the description in \cite[\href{https://stacks.math.columbia.edu/tag/05R0}{Tag 05R0}]{stacks-project}. Consider the following situation after taking cohomology, where we assume that $H^{n-1}(Z'') = 0$.
    \begin{equation}\label{E207}
    \begin{tikzcd}
	&&& 0 \\
	&& {H^n(Y)} & {H^n(Z)} & {H^{n+1}(X)} \\
	&& {H^n(Y')} & {H^n(Z')} \\
	0 & {H^n(X'')} & {H^n(Y'')} & {H^n(Z'')} \\
	&& {H^{n+1}(Y)}
	\arrow[from=1-4, to=2-4]
	\arrow[from=2-3, to=2-4]
	\arrow[from=2-3, to=3-3]
	\arrow[from=2-4, to=2-5]
	\arrow[from=2-4, to=3-4]
	\arrow[from=3-3, to=3-4]
	\arrow[from=3-3, to=4-3]
	\arrow[from=3-4, to=4-4]
	\arrow[from=4-1, to=4-2]
	\arrow[from=4-2, to=4-3]
	\arrow[from=4-3, to=4-4]
	\arrow[from=4-3, to=5-3, "d^n_{Y''}"]
    \end{tikzcd}
    \end{equation}
    Let $\delta \colon H^n(X'') \cap \ker d^n_{Y''} \to H^{n+1}(X)$ be the connecting map coming from the snake lemma applied to the above situation. Let $\partial \colon H^n(X'') \to H^{n+1}(X)$ be the differential associated with the triangle 
    \[
    X \to X' \to X'' \xrightarrow{\partial} +1.
    \]
    Then $\delta = -\partial$.
\end{lemma}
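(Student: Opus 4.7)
The plan is to rectify the $3 \times 3$ diagram \eqref{E206} to a diagram of honest short exact sequences of complexes in $\cal{A}$ (e.g., via Cartan--Eilenberg resolutions). Under this rectification, each row $X \to Y \to Z$, $X' \to Y' \to Z'$, $X'' \to Y'' \to Z''$ and each column $X \to X' \to X''$, $Y \to Y' \to Y''$, $Z \to Z' \to Z''$ becomes a strict short exact sequence of complexes with strictly commuting squares, and all triangulated boundary maps agree with the classical connecting morphisms of these short exact sequences.

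Denote the horizontal maps of the top row by $X \xrightarrow{f} Y \xrightarrow{g} Z$ and the vertical maps of the left column by $X \xrightarrow{i} X' \xrightarrow{\pi} X''$, with analogous notation $f', g', f'', g'', i', \pi', i'', \pi''$ for the other rows and columns. The computation of $\partial(a)$ is direct: pick a cocycle $\alpha \in (X'')^n$ representing $a$, lift along $\pi$ to $\tilde\alpha \in (X')^n$, and observe that $d\tilde\alpha = i(x)$ for a cocycle $x \in X^{n+1}$, with $\partial(a) = [x]$.

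To compute $\delta(a)$ via the snake construction, consider $\mu := f'(\tilde\alpha) \in (Y')^n$. It lifts the cocycle $\beta'' := f''(\alpha) \in (Y'')^n$ along $\pi'$, but is itself not a cocycle: one has $d\mu = i'(f(x))$. Unwound through this explicit lift, the hypothesis that the image of $a$ in $H^n(Y'')$ lies in $\ker d^n_{Y''}$ translates into the vanishing $[f(x)] = 0$ in $H^{n+1}(Y)$, so one may pick $\omega \in Y^n$ with $d\omega = f(x)$. Then $\beta' := \mu - i'(\omega) \in (Y')^n$ is a genuine cocycle lift of $\beta''$, representing the intermediate element $b \in H^n(Y')$ produced by the snake lemma. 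Applying $g'$ and using $g' \circ f' = 0$ (middle-row exactness) together with $g' \circ i' = i'' \circ g$, one finds
\[
g'(\beta') \;=\; -\, i''(g(\omega)).
\]

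This is the origin of the sign: the snake-lemma lift $c \in H^n(Z)$ of $[g'(\beta')]$, unique by the injectivity $H^n(Z) \hookrightarrow H^n(Z')$ afforded by $H^{n-1}(Z'') = 0$, equals $-[g(\omega)]$. Applying the top-row boundary to $[g(\omega)]$ by using $\omega \in Y^n$ itself as a lift and recalling $d\omega = f(x)$ yields $[x] = \partial(a)$, so $\delta(a) = -\partial(a)$. The only bookkeeping step requiring care is the translation of the abstract hypothesis $d^n_{Y''}(f''_*(a)) = 0$ into the concrete statement $[f(x)] = 0 \in H^{n+1}(Y)$ via the lift $\mu$; this follows by computing the middle-column connecting morphism on $[\beta'']$ through $\mu$, and once this identification is in place, the sign in the conclusion emerges transparently from the cancellation $g'(f'(\tilde\alpha)) = 0$.
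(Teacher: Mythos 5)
Your proof is correct, but it takes a genuinely different route from the one in the paper. The paper works entirely inside the triangulated category: it invokes the decomposition of the $3\times 3$ diagram via the octahedral axiom (Stacks Project Tag 05R0), which produces an auxiliary object $A$ whose six ``diameter'' triangles recover all rows and columns, and then carries out a short diagram chase with elements $\eta, \eta', \eta''$ using only the relations $xy' = z'x'' = y''z = 0$; the sign $\delta = -\partial$ falls out of the single identity $x''(\eta) - y'(\eta') = -z(\eta'')$. You instead rectify to a $3\times 3$ grid of strict short exact sequences of complexes and do the snake chase at the cochain level, with the sign emerging from the cancellation $g'(f'(\tilde\alpha)) = 0$. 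Both arguments are sound and the chains of implications match up (your $\tilde\alpha, \beta', -[g(\omega)]$ correspond to the paper's $\eta, \eta', \eta''$), and the cochain computation is arguably more transparent about where the sign comes from. The trade-off is the rectification step: your appeal to ``Cartan--Eilenberg resolutions'' is the weakest link, since Cartan--Eilenberg resolves a single complex, not a diagram, and strictifying a homotopy-commutative square to a strictly commutative square of degreewise-split injections (so that all nine rows and columns are genuine short exact sequences with strictly commuting faces) requires an additional argument — e.g.\ replacing by K-injective complexes, making one leg a cofibration via a mapping cylinder, and absorbing the homotopy into the other leg. This is standard but should be spelled out; the paper's octahedral-axiom approach sidesteps the issue entirely and also has the virtue of applying in any triangulated category, not just $D(\mathcal{A})$.
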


\begin{rem}\label{rem2.3}
\begin{enumerate}
    \item We note that the $3 \times 3$ diagram above is not functorial in any of the squares as homotopy limits are not functorial in $D(\Sh{A})$.
    \item The lemma applies to a ``short exact sequence of short exact sequences" where it proves a variant of \cite[Lemma 2.8]{Srinivas}.
\end{enumerate}
\end{rem}

\begin{proof}[Proof of Lemma \ref{L202}]
    We begin by recalling the proof of the fact that such a $3 \times 3$ diagram (\ref{E206}) can be constructed (non-canonically) out of a commutative square in $D(\Sh{A})$. Indeed, the main point of the proof, which can be found in \cite[\href{https://stacks.math.columbia.edu/tag/05R0}{Tag 05R0}]{stacks-project} is the following.

    Let $X \to Y' \to A \to +1$ be a distinguished triangle. Then one can show that there exists a diagram
    \begin{equation}
        \begin{tikzcd}
        \coordinate (A) at (0:1.8cm);    
        \coordinate (B) at (60:1.8cm);
        \coordinate (C) at (-60:1.8cm);
        \coordinate (D) at (180:1.8cm);
        \coordinate (E) at (-120:1.8cm);
        \coordinate (F) at (120:1.8cm);
        \coordinate (G) at (0:0cm);

        \node (1) at (A) {Z'};
        \node (2) at (B) {Z};
        \node (3) at (C) {X[1]};
        \node (4) at (D) {X''};
        \node (5) at (E) {Y''};
        \node (6) at (F) {Y'};
        \node (7) at (G) {A};

        \arrow[from=7, to=1, "{z'}"']
        \arrow[from=2, to=7, "z"]
        \arrow[from=7, to=3, "x"']
        \arrow[from=4, to=7, "{x''}"]
        \arrow[from=7, to=5, "{y''}"']
        \arrow[from=6, to=7, "{y'}"]
        \end{tikzcd}
    \end{equation}
    where the diameters form distinguished triangles. In particular, $xy' = z'x'' = y''z = 0$. Furthermore, the compositions
    \[
    X'' \xrightarrow{y''x''} Y'', \qquad X'' \xrightarrow{xx'' = \partial} X[1], \qquad Y' \xrightarrow{y''y'} Y'', \qquad Y' \xrightarrow{z'y'} Z', \qquad Z \xrightarrow{z'z} Z', \qquad Z \xrightarrow{xz} X[1]
    \]
    define the respective arrows appearing in (\ref{E206}). The fact that the rows and columns of (\ref{E206}) form distinguished triangles then follows from a standard application of the \emph{octahedral axiom}. Let us now prove the main claim of the lemma.

    Let $\eta \in H^n(X'') \cap \ker d^n_{Y''}$. Then there exists $\eta' \in H^n(Y')$ whose image in $H^n(Y'')$ coincides with the image of $\eta$, i.e. 
    \begin{equation}
        y''\left( x''(\eta) - y'(\eta')\right) = 0.
    \end{equation}
    As a result, there exists $\eta'' \in H^n(Z)$ such that 
    \begin{equation}
        x''(\eta) - y'(\eta') = -z(\eta'').
    \end{equation}
    This shows that $z'z(\eta'') = z'y'(\eta')$, meaning that $\eta''$ is the unique lift to $H^n(Z)$ of the image of $\eta'$ in $H^n(Z')$. Therefore
    \begin{equation}
        \delta(\eta) = xz(\eta'') = -xx''(\eta) = -\partial(\eta)
    \end{equation}
    which completes the proof.
\end{proof}

\section{On the Bloch-Kato logarithm of an extension}
\label{section: onBKlog}

In this section we review several recipes for the computation of the Bloch--Kato logarithm.
The main references are \cite{Nekovarheight} and \cite[\S 7.2]{SV-S}.
As we expect the reader to be already familiar with $p$-adic Hodge theory, we will directly refer to the notes \cite{BC2009} for detailed definitions of the relevant period rings and the category of filtered $(\varphi, N)$-modules.

Let $K$ be a finite extension of $\Q_p$ and $K_0$ be its maximal unramified subfield.
We let $\varphi$ be the absolute Frobenius on $K_0$ and let $G_K = \mathrm{Gal}(\overline{K}/K)$.
We let $t\in B_{\dR}$ denote Fontaine's $2\pi i$, which depends on a choice of compatible $p$-th power roots of unity.

In this section we will only consider $\Q_p$-linear representations, but everything can be generalized to the $L$-linear setting for $L/\Q_p$ a finite (large enough) extension. We will need this generalization for the applications.



Let $Rep_{\st}(G_K)$ be the category of semistable representations and $MF_K(\varphi, N)$ be the category of filtered $(\varphi, N)$-modules over $K$ \cite[\S 1.6]{Nekovarheight}.
Then $D_{\st}$ can be viewed as a functor 
\[ D_{\st} : Rep_{\st}(G_K) \rightarrow MF_K(\varphi, N)\]
whose essential image are the admissible filtered $(\varphi, N)$-modules (denoted by $MF_K^{ad}(\varphi, N)$).

We let $e_n\in D_{\cris}(\Q_p(n))$ denote a canonical generator. It can be obtained as $e_n=t^{-n}\otimes v_n$, where $v_n\in\Z_p(n)$ is the generator coming from the same choice of $p$-th power roots of unity appearing in the definition of $t\in B_{\cris}\subset B_{\dR}$.

Let $V$ be a $p$-adic representation. We define for $\ast \in \{ \dR, \cris, \st \}$
\[ H^1_{\ast}(K, V) := \ker \left ( H^1(K, V) \rightarrow H^1(K, V \otimes_{\Q_p} B_\ast) \right ). \]
If $V$ is a $\ast$-representation, then $H^1_{\ast}(K, V)$ classifies extensions $0 \rightarrow V \rightarrow E \rightarrow \Q_p(0) \rightarrow 0$ in which $E$ is also a $\ast$-representation \cite[Proposition 1.26]{Nekovarheight}.
We shall adopt the common notations $H^1_g:= H^1_{\dR}, H^1_f:= H^1_{\cris}$ and  
\[ H^1_e(K, V) := \ker \left ( H^1(K, V) \rightarrow H^1(K, V \otimes_{\Q_p} B_{\cris}^{\varphi=1}) \right ). \]


Let $V$ be a de Rham representation. 
We define two complexes (of $K_0$-vector spaces)
\begin{align*}
    C^\bullet_{\cris}(V) := &[ D_{\cris}(V) \xrightarrow[]{(1 -\varphi,\iota )} D_{\cris}(V) \oplus D_{\dR}(V)/ \Fil^0 ]\\
    C^\bullet_{\st}(V) := &[ D_{\st}(V) \xrightarrow[]{(1 -\varphi, N,\iota )} D_{\st}(V) \oplus D_{\st}(V) \oplus D_{\dR}(V)/ \Fil^0 \xrightarrow[]{(N, -(1- p\varphi), 0)} D_{\st}(V) ] 
\end{align*}
starting at degree $0$. Here, for $\ast\in\{\cris,\st\}$, $\iota$ denotes the natural map $D_{\ast}(V) \rightarrow D_{\dR}(V) \rightarrow D_{\dR}(V)/\Fil^0$.

Then there are natural identifications
\[
H^0(K, V) \cong H^0(C^\bullet_{\cris}(V)) = H^0(C^\bullet_{\st}(V)),\qquad H^1_f(K, V) \cong H^1(C^\bullet_{\cris}(V)),\qquad H^1_{\st}(K, V) \cong H^1(C^\bullet_{\st}(V)).
\]

From now on, we will assume that $V$ is a semistable $G_K$-representation.

\begin{rem}
    We also need the following alternative description of $C^\bullet_{\st}(V)$:
    \[
        \tilde{C}^\bullet_{\st}(V) := [ D_{\st}(V) \oplus \Fil^0 D_{\dR}(V) \rightarrow D_{\st}(V) \oplus D_{\st}(V) \oplus D_{\dR}(V) \xrightarrow[]{(N, -(1 -p\varphi), 0)} D_{\st}(V) ]
    \]
    where the first map is given by $[u, v] \mapsto [(1-\varphi)u, Nu, u-v]$.
    It is clear that the two complexes ${C}^\bullet_{\st}(V)$ and $\tilde{C}^\bullet_{\st}(V)$ compute the same cohomology.
    More generally, for any polynomial $P \in 1+ T \cdot \Q_p[T]$, we can consider the complexes $C^\bullet_{\st, P}(V)$ and $\tilde{C}^\bullet_{\st, P}(V)$ in which the map $1-\varphi$ (resp. $-(1-p\varphi)$) is replaced by $P(\varphi)$ (resp. $-P(p\varphi)$).
    Again, the two complexes $C^\bullet_{\st, P}(V)$ and $\tilde{C}^\bullet_{\st, P}(V)$ compute the same cohomology (which we will denote by $H^\bullet_{\st,P}(K,V)$), and we will use them interchangeably.
    
\end{rem}

Let us now describe the isomorphism $H^1_{\st}(K, V) \cong H^1(C^\bullet_{\st}(V))$.
Given $E \in H^1_{\st}(K, V)$, it can be viewed as an extension of $\Q_p$ by $V$.
Then applying $D_{\st}$ and $D_{\dR}$, we get extensions 
\[ 0 \rightarrow D_{\st}(V) \rightarrow D_{\st}(E) \rightarrow K_0 \rightarrow 0
\]
and 
\[ 0 \rightarrow D_{\dR}(V) = D_{\st}(V) \otimes K \rightarrow D_{\dR}(E) = D_{\st}(E) \otimes K\rightarrow K \rightarrow 0.
\]

Let $s: K_0 \rightarrow D_{\st}(E)$ be a $K_0$-linear section. We extend it to a $K$-linear section $K \rightarrow D_{\dR}(E)$ and still denote the map by $s$.
Consider the following diagram
\begin{equation}
\label{extdiagram-BK}
\begin{tikzcd}
    0 \arrow[r] &D_{\dR}(V) \arrow[r] \arrow[d] &D_{\dR}(E) \arrow[r] \arrow[d, "\iota"] &K \arrow[l, bend right, "s"'] \arrow[r] \arrow[d] &0 \\
    0 \arrow[r] &D_{\dR}(V)/ \Fil^0 \arrow[r, "\sim" ', "\delta"] &D_{\dR}(E) /\Fil^0 \arrow[r] &0 
\end{tikzcd}.    
\end{equation}
Then the triple
\[
[a, b, c] := \left [ (1-\varphi) s(1), Ns(1), \delta^{-1} \circ \iota \circ s(1) \right ] \in D_{\st}(V) \oplus D_{\st}(V) \oplus D_{\dR}(V)/ \Fil^0 
\]
is a $1$-cocycle in $C^\bullet_{\st}(V)$.
One can verify that the class of $[a, b, c]$ is independent of the choice of $s$.
Hence, we get a map $H^1_{\st}(V) \rightarrow H^1(C^\bullet_{\st}(V))$ that one can prove is an isomorphism \cite[\S 1.19]{Nekovarheight}.


\begin{lemma}[{\cite[Lemma 7.4.4]{SV-S}}]
    Assume that $D_{\st}(V)^{\varphi =1}=0$.
    Then the map $H^1_{\st}(K, V) \rightarrow D_{\dR}(V)/ \Fil^0$ induced by the map $C^1_{\st}(V) \rightarrow D_{\dR}(V)/ \Fil^0$
    \[
    [a, b, c] \mapsto c - (1-\varphi)^{-1} (a) \mod \Fil^0 D_{\dR}(V)
    \]
    is well-defined and is the semistable Bloch--Kato logarithm $\log_{\BK,\st}$.    
\end{lemma}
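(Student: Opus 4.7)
The plan is to verify the statement in two stages: well-definedness of the formula on cohomology, and then its identification with $\log_{\BK,\st}$.

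First, I would check descent to cohomology. The hypothesis $D_{\st}(V)^{\varphi=1}=0$, together with the finite-dimensionality of $D_{\st}(V)$ over $K_0$, guarantees that $1-\varphi$ is an automorphism of $D_{\st}(V)$, so $(1-\varphi)^{-1}(a)$ is well-defined as an element of $D_{\st}(V)\subset D_{\dR}(V)$. A $1$-coboundary in $C^{\bullet}_{\st}(V)$ has the form $((1-\varphi)x,\,Nx,\,\iota(x))$ for some $x\in D_{\st}(V)$, and under the formula it maps to $\iota(x)-(1-\varphi)^{-1}((1-\varphi)x)=\iota(x)-\iota(x)=0$ modulo $\Fil^0 D_{\dR}(V)$. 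Hence the formula factors through $H^1(C^{\bullet}_{\st}(V))\cong H^1_{\st}(K,V)$.

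For the identification with $\log_{\BK,\st}$, I would first reduce to the case of cocycles with $a=0$. Given an arbitrary cocycle $[a,b,c]$, subtracting the coboundary of $(1-\varphi)^{-1}(a)\in D_{\st}(V)$ produces the cohomologous representative
\[
\bigl[\,0,\; b-N(1-\varphi)^{-1}(a),\; c-(1-\varphi)^{-1}(a)\,\bigr],
\]
and the value of the formula is preserved. So it suffices to show that on cocycles $[0,b,c]$ the class $c\in D_{\dR}(V)/\Fil^0$ coincides with $\log_{\BK,\st}([E])$. For such a cocycle, obtained via a $K_0$-linear section $s\colon K_0\to D_{\st}(E)$ as in diagram \eqref{extdiagram-BK}, the condition $a=0$ says exactly that $s(1)\in D_{\st}(E)^{\varphi=1}$ is a $\varphi$-fixed lift of $1\in K_0$. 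I would then match this with the intrinsic definition of $\log_{\BK,\st}$ as the inverse of the Bloch--Kato exponential attached to the fundamental exact sequence of $p$-adic Hodge theory: the $\varphi$-fixed element $s(1)$, read through the admissibility isomorphism $D_{\st}(E)\otimes_{K_0}B_{\st}\cong E\otimes_{\Q_p}B_{\st}$, furnishes the required lift to identify $[E]$ as the Bloch--Kato exponential of $c$, and its image in $D_{\dR}(E)/\Fil^0$, pulled back via $\delta^{-1}$ to $D_{\dR}(V)/\Fil^0$, is precisely $c$ by construction of the isomorphism $H^1_{\st}(K,V)\cong H^1(C^{\bullet}_{\st}(V))$.

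The main technical subtlety is that a $\varphi$-fixed element of $D_{\st}(E)$ need not be annihilated by the monodromy operator $N$, whereas the fundamental exact sequence naturally involves $B_{\cris}^{\varphi=1}\subset B_{\st}^{\varphi=1,N=0}$. One can either argue directly that the final value is insensitive to $N$ (consistent with the fact that $b$ does not appear in the output), or appeal to Nekov\'{a}\v{r}'s explicit identification in \cite[\S 1.19]{Nekovarheight} of $H^1_{\st}(K,V)$ with $H^1(C^{\bullet}_{\st}(V))$, which is already compatible with the Bloch--Kato filtration and reduces the remaining statement to a manipulation inside the category of admissible filtered $(\varphi,N)$-modules.
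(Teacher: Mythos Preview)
The paper does not supply its own proof of this lemma; it is quoted verbatim from \cite[Lemma 7.4.4]{SV-S}, so there is no argument in the paper to compare against.

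On the merits of your proposal: the well-definedness step is clean and correct. The hypothesis $D_{\st}(V)^{\varphi=1}=0$, finite $\Q_p$-dimension, and $\Q_p$-linearity of $1-\varphi$ give bijectivity, and your coboundary check is fine. The reduction to representatives with $a=0$ via subtracting the coboundary of $(1-\varphi)^{-1}(a)$ is also correct and is the natural move.

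The identification with $\log_{\BK,\st}$ is where your argument becomes a sketch rather than a proof. You correctly isolate the issue: a $\varphi$-fixed lift $s(1)\in D_{\st}(E)$ need not satisfy $Ns(1)=0$, so it does not immediately live in $(E\otimes B_{\cris})^{\varphi=1}$ via admissibility, and the link to the fundamental exact sequence is not direct. Your two suggested resolutions (arguing the output is insensitive to $N$, or deferring to Nekov\'a\v{r}'s explicit identification in \cite[\S 1.19]{Nekovarheight}) are both legitimate, but neither is actually carried out. In practice the second route is what one does: Nekov\'a\v{r} identifies $H^1_{\st}$ with $H^1(C^\bullet_{\st}(V))$ compatibly with the Bloch--Kato filtration, and then the claim reduces to the crystalline case where the exponential is defined. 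If you want a self-contained argument, you would need to show that for $[0,b,c]$ the class in $H^1_{\st}(K,V)$ depends only on $c$ modulo $\Fil^0$ (which follows since modifying $s$ by an element of $\Fil^0 D_{\dR}(V)\cap D_{\st}(V)$ changes $b$ but not the extension class), and then match with $\exp_{\BK}$ on the crystalline subquotient.
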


\begin{rem}
    This map is not injective in general. On the other hand, if $V$ is $K$-crystalline such that $1-\varphi$ and $1-p\varphi$ are bijective on $D_{\st}(V)$, then the above map coincides with the usual Bloch--Kato logarithm (on $H^1_e$) under the identification $H^1_e(K,V)=H^1_g(K,V)\cong H^1(C^\bullet_{\st}(V))$
\end{rem}

Summarizing the above result, we have the following proposition (cf. \cite[Proposition 7.4.5]{SV-S}).
\begin{prop}
\label{Prop: BKlog1}
    Let $ 0 \rightarrow V \rightarrow E \rightarrow \Q_p \rightarrow 0$ be an extension of semi-stable representations of $G_K$. Assume that $1-\varphi$ is bijective on $D_\st(V)$. 
    Choose a $K_0$-linear section $s: K_0 \rightarrow D_{\st}(E)$ and extend it $K$-linearly to $s: K \rightarrow D_{\dR}(E)$ as before.

    Then, with the notation of diagram \eqref{extdiagram-BK},
    \[
    \log_{\BK,\st}(E) =\delta^{-1} \circ s(1) - (1-\varphi)^{-1}\big((\delta^{-1} \circ (1-\varphi)s(1)\big) \mod \Fil^0 D_{\dR}(V).
    \]
\end{prop}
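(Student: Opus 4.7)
The plan is to deduce the proposition as a direct corollary of the preceding lemma, by substituting the explicit $1$-cocycle representative of the extension class $E$ described just above the lemma.

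First, I note that the hypothesis that $1-\varphi$ is bijective on $D_{\st}(V)$ implies in particular $D_{\st}(V)^{\varphi=1}=0$, so the previous lemma applies. Using the chosen section $s$ together with diagram \eqref{extdiagram-BK}, the cocycle construction recalled above produces a $1$-cocycle
\[
[a,b,c] = \bigl[(1-\varphi)s(1),\,Ns(1),\,\delta^{-1}\circ s(1)\bigr]
\]
of $C^\bullet_{\st}(V)$ whose cohomology class corresponds to $E$ under the identification $H^1(C^\bullet_{\st}(V)) \cong H^1_{\st}(K,V)$. Applying the formula for the semistable Bloch--Kato logarithm given in the preceding lemma to this cocycle yields immediately
\[
\log_{\BK,\st}(E) \;=\; c - (1-\varphi)^{-1}(a) \;=\; \delta^{-1}\circ s(1) - (1-\varphi)^{-1}\bigl((1-\varphi)s(1)\bigr) \ \mathrm{mod}\ \Fil^0 D_{\dR}(V).
\]

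To conclude, I observe that $(1-\varphi)s(1)$ actually lies in $D_{\st}(V)\subset D_{\dR}(V)$, because the image of $s(1)$ in $K_0$ is fixed by $\varphi$; hence its class modulo $\Fil^0 D_{\dR}(V)$ coincides with $\delta^{-1}$ applied to its class in $D_{\dR}(E)/\Fil^0$. Thus the second term on the right can equivalently be rewritten as $(1-\varphi)^{-1}\bigl(\delta^{-1}\circ(1-\varphi)s(1)\bigr)$, which matches the stated formula.

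There is essentially no analytical obstacle here: the proof is a purely formal translation of the previous lemma. The one notational subtlety worth flagging is that $(1-\varphi)^{-1}$ must be interpreted as the inverse of the bijection $(1-\varphi)\colon D_{\st}(V)\to D_{\st}(V)$ evaluated at the specific element $(1-\varphi)s(1)\in D_{\st}(V)$, and not as an attempt to invert $1-\varphi$ at $s(1)\in D_{\st}(E)$ (which need not be fixed by $\varphi$ at all); the two differ by the unique $\varphi$-invariant lift of $1\in K_0$ provided by the hypothesis on $1-\varphi$, and consequently cannot be identified before the reduction modulo $\Fil^0$ is taken.
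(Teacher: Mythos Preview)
Your proposal is correct and follows exactly the paper's approach: the proposition is presented in the paper as a direct summary of the preceding lemma (with the words ``Summarizing the above result''), and your argument simply substitutes the explicit cocycle $[a,b,c]=[(1-\varphi)s(1),Ns(1),\delta^{-1}\circ s(1)]$ into that lemma's formula. Your final remark on how to read $(1-\varphi)^{-1}$ and why the $\delta^{-1}$ in the second term is harmless is accurate and even clarifies a point the paper leaves implicit.
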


\begin{rem}
    In practice, it will be convenient to choose a section $s$ that respects some of the filtered $(\varphi, N)$-module structures.
    We will illustrate two examples below.
    
    When $s$ respects the filtrations, then $\iota \circ s (1) =0$ and $\log_{BK,\st}(E) = - (1-\varphi)^{-1}\big(\delta^{-1}\circ(1-\varphi)s(1)\big)$.
    Equivalently, by the assumption of the Frobenius action, there is a unique $\varphi$-equivariant splitting $t_{\varphi}: D_{\st}(E) \rightarrow D_{\st}(V)$ which we extend to $t_{\varphi}: D_{\dR}(E) \rightarrow D_{\dR}(V)$ linearly.
    Then $t_{\varphi} \circ s(1)$ is well-defined modulo $\Fil^0 D_{\dR}(V)$.
    In fact, it is equal to $(1-\varphi)^{-1} \circ \delta^{-1} \circ (1-\varphi)s(1)$ in $D_{\dR}(V)/ \Fil^0$.
    
    If we can choose $s$ such that it respects the $(\varphi, N)$-structures, then $(1-\varphi)s(1) = 0 = Ns(1)$ and furthermore $H^1_{\st}(K, V) \cong D_{\dR}(V)/ \Fil^0 D_{\dR}(V)$.
    This is in accordance with \cite[Lemma 2.1]{Iovita-Spiess-derivative}.
\end{rem}

We will also need a slighly more sophisticated formula for the Bloch--Kato logarithm in the following setting.

Let
\begin{equation}
\label{E1ext}
0\to V\to E_1\to W\to 0   
\end{equation}
be a $K$-semistable extension of semistable $G_K$-representations. According to \cite[Proposition 1.26]{Nekovarheight}, there is a natural isomorphism
\begin{equation}
\label{extiso}
\Ext^1_{K,\st}(W,V)\xrightarrow{\cong}H^1_{\st}(K,V\otimes_{\Q_p}W^\vee).  
\end{equation}
The image of the class of the extension \eqref{E1ext} under this isomorphism is explicitly described by the bottom row of the pullback diagram
\[
\begin{tikzcd}
            0 \arrow[r] & V\otimes_{\Q_p}W^\vee\arrow[r] &E_1\otimes_{\Q_p}W^\vee \arrow[r]& W\otimes_{\Q_p}W^\vee\arrow[r] &0 \\
            0 \arrow[r] &V\otimes_{\Q_p}W^\vee \arrow[r]\arrow[u, equal] &E_2\arrow[u,hook] \arrow[r] & \Q_p\arrow[u,hook] \arrow[r]  &0
    \end{tikzcd}.
\]

\begin{ass}
From now one we assume that $V$ and $W$ are $K$-crystalline and, moreover, that $1-\varphi$ and $1-p\varphi$ are bijective on $D_{\dR}(V\otimes_{\Q_p}W^\vee)$.   
\end{ass}

Upon fixing an element $\omega\in\Fil^nD_{\dR}(W)\cap D_{\st}(W)$, we obtain a well-defined linear map
\[
ev_\omega:D_{\dR}(V\otimes_{\Q_p}W^\vee)/\Fil^0\to D_{\dR}(V(n))/\Fil^0 D_{\dR}(V(n)) =D_{\dR}(V)/\Fil^n D_{\dR}(V).
\]
In particular, it makes sense to consider $ev_\omega(\log_{\BK}(E_2))$. 

\begin{prop}
\label{Prop: BKlog2}
In the above setting, assume that there exists a polynomial $P(T)\in 1+T\cdot\Q_p[T]$ such that $P(\varphi)(\omega)=0$ and such that $P(\varphi)$ and $P(p\varphi)$ are bijective on $D_\st(V(n))$. Fix any $K_0$-linear section $s_1:D_{\st}(W)\to D_{\st}(E_1)$ and extend it $K$-linearly to $s_1:D_{\dR}(W)\to D_{\dR}(E_1)$. Then
\[
ev_\omega(\log_{\BK}(E_2))=s_1(\omega)-P(\varphi)^{-1}\big(P(\varphi)s_1(\omega)\big)\mod\Fil^n D_{\dR}(V).
\]
\end{prop}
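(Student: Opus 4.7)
The idea is to trivialise the formula of Proposition \ref{Prop: BKlog1} by choosing a $\varphi$-equivariant $K_0$-linear section of $D_\st(E_2)\to K_0$, and then to contract with $\omega$ directly. Starting from the given $s_1$, viewed as an element of $\Hom_{K_0}(D_\st(W), D_\st(E_1))\cong D_\st(E_1\otimes_{\Q_p}W^\vee)$ that lifts the canonical element $\id_W\in D_\st(W\otimes W^\vee)$, I set
\[
\tilde{s} := s_1 - (1-\varphi)^{-1}(1-\varphi)s_1.
\]
Since $(1-\varphi)$ annihilates $\id_W$, the correction term already lies in $D_\st(V\otimes W^\vee)$, where $(1-\varphi)^{-1}$ is well-defined by the Assumption. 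A direct check shows that $\tilde{s}$ still lifts $\id_W$ while satisfying $(1-\varphi)\tilde{s}=0$; conceptually, this reflects the vanishing of $\Ext^1_{\varphi}(D_\st(W), D_\st(V))\cong D_\st(V\otimes W^\vee)/(1-\varphi)D_\st(V\otimes W^\vee)$ under the Assumption. Viewing $\tilde{s}$ as the corresponding $K_0$-linear section $K_0\to D_\st(E_2)$, $1\mapsto\tilde{s}$, Proposition \ref{Prop: BKlog1} then collapses to
\[
\log_{\BK}(E_2)=\delta^{-1}\tilde{s} \mod \Fil^0 D_\dR(V\otimes W^\vee).
\]

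Applying $ev_\omega$ and interpreting the result by means of any lift $\tilde{\omega}\in\Fil^n D_\dR(E_1)$ of $\omega$ -- so that $\tilde{s}(\omega)-\tilde{\omega}\in D_\dR(V)$ represents the image in $D_\dR(V)/\Fil^n D_\dR(V)$ -- one obtains $ev_\omega(\log_{\BK}(E_2))\equiv\tilde{s}(\omega)\mod \Fil^n D_\dR(V)$. To match with the stated expression, the $\varphi$-equivariance of $\tilde{s}$ gives $P(\varphi)\tilde{s}(\omega)=\tilde{s}(P(\varphi)\omega)=0$, so the difference $z:=s_1(\omega)-\tilde{s}(\omega)\in D_\st(V)$ satisfies $P(\varphi)z=P(\varphi)s_1(\omega)$. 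By the bijectivity hypothesis on $P(\varphi)$ one concludes $z=P(\varphi)^{-1}(P(\varphi)s_1(\omega))$, and substituting gives the claimed identity.

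The main subtlety I anticipate is bookkeeping the Frobenius normalisations in play (on $D_\st(V)$ versus on $D_\st(V(n))$) and verifying that each bijectivity condition is invoked at the right step. A more conceptual reformulation, which would naturally account for both hypotheses on $P(\varphi)$ and $P(p\varphi)$, is to realise $ev_\omega$ as a morphism of complexes $\tilde{C}^\bullet_{\st}(V\otimes W^\vee) \to \tilde{C}^\bullet_{\st,P}(V(n))$, transporting the cocycle representing $E_2$ into the $P$-twisted complex, where $\log_{\BK}$ is computed by the direct analogue of Proposition \ref{Prop: BKlog1}.
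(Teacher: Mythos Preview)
Your argument is correct and gives a genuinely different proof from the paper's. Rather than passing through the cup-product formalism of \cite{BLZ2016} and the pushout diagram relating $E_2\otimes W$ to $E_1$, you exploit directly the observation in the Remark following Proposition~\ref{Prop: BKlog1}: the Assumption forces a unique $\varphi$-equivariant splitting of $D_\st(E_2)\to K_0$, and once you use it the formula of Proposition~\ref{Prop: BKlog1} collapses to the third component alone. Evaluating at $\omega$ then yields $\tilde{s}(\omega)$, and the passage to the stated expression in terms of $s_1(\omega)$ is a clean one-line computation using $P(\varphi)\tilde{s}(\omega)=\tilde{s}(P(\varphi)\omega)=0$ together with the bijectivity hypothesis. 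The paper instead computes the cup product of the $(1-\varphi)$-cocycle of $E_2$ with $[\omega]\in H^0_{\st,P}(W(n))$ to land in $H^1_{\st,P}(V(n))$, and then unwinds a pushout diagram to identify the resulting cocycle with $[P(\varphi)s_1(\omega),Ns_1(\omega),s_1(\omega)]$. Your route is more elementary and self-contained; the paper's is more systematic in that it works inside the $P$-syntomic complexes from the start, which aligns with how the formula is actually applied later (Lemma~\ref{L37} and its variants).

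The caveat you flag about Frobenius normalisations is real and is the one place your write-up should be tightened. The hypothesis as stated concerns $P(\varphi)$ acting on $D_\st(V(n))$, whereas at your final step you invert $P(\varphi_V)$ on $D_\st(V)$; these differ by a twist $\varphi_{V(n)}=p^{-n}\varphi_V$. The paper's proof has the same ambiguity (it silently identifies $\omega$ with a class in $H^0_{\st,P}(W(n))$), so this is a point where you should be explicit rather than a defect peculiar to your approach. Your suggested fix---realising $ev_\omega$ as a morphism $\tilde{C}^\bullet_\st(V\otimes W^\vee)\to\tilde{C}^\bullet_{\st,P}(V(n))$---is exactly the right way to make the bookkeeping transparent, and would also make clear why $P(p\varphi)$ bijective (which neither argument seems to use essentially) appears among the hypotheses.
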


\begin{proof}
The element $\log_{\BK}(E_2)$ is computed via the recipe of Proposition \ref{Prop: BKlog1}. For any $K_0$-linear splitting $s_2\colon K_0=D_{\st}(\Q_p)\to D_{\st}(E_2)$ we have
\[
\log_{\BK}(E_2)=s_2(1)-(1-\varphi)^{-1}\big((1-\varphi_{E_2})s_2(1)\big)\mod\Fil^0 D_{\dR}(V \otimes W^\vee).
\]
Arguing in the same way as in \cite[Proposition 1.4.3]{BLZ2016}, one can compute the cup product $[w,x,y]$ of the classes $[(1-\varphi)s_2(1),N(s_2(1)),s_2(1)]$ and $[\omega] \in H^0_{\st,P}(W(n))$ according to the recipe in \cite[Table 1]{BLZ2016} and then obtain
\[
ev_\omega(\log_{\BK}(E_2))=y-\big(P(\varphi)^{-1}(w)\mod\Fil^0 D_{\dR}(V(n)) \big).
\]
Concerning the cup product, we have to fix polynomials $a(T_1,T_2), b(T_1,T_2)$ such that $a(T_1,T_2)\cdot (1-T_1)+b(T_1,T_2)P(T_2)=P(T_1T_2)$. The obvious choice is $b(T_1,T_2)=1$ and $a(T_1,T_2)=(P(T_1T_2)-P(T_2))/(1-T_1)$. According to the table, we get
\[
[w,x,y]=\text{(image under $\lambda$ of) }[ a(\varphi_1,\varphi_2)((1-\varphi)s_2(1)\otimes\omega),N(s_2(1))\otimes\omega, s_2(1)\otimes\omega ]
\]
where
\[
\lambda:D_{\dR}(V\otimes_{\Q_p}W^\vee)\otimes_{\Q_p}D_{\dR}(W(n))\to D_{\dR}(V(n))
\]
is the obvious pairing. In order to complete the proof we are thus left to show that
\[
[P(\varphi)(s_1(\omega)),N(s_1(\omega)),s_1(\omega)]=[w,x,y]\in H^1_{\st,P}(V(n)).
\]
It is easy to see that the inverse of the isomorphism \eqref{extiso} can be explicitly described via the pushout diagram
\begin{equation*}
    \begin{tikzcd}
            0 \arrow[r] & V\otimes_{\Q_p} W^\vee\otimes_{\Q_p}W\arrow[r]\arrow[d, twoheadrightarrow] &E_2\otimes_{\Q_p}W \arrow[r]\arrow[d, twoheadrightarrow] & W\arrow[r]\arrow[d, equal] &0 \\
            0 \arrow[r] &V \arrow[r] &E_3  \arrow[r] & W \arrow[r]  &0
    \end{tikzcd},
\end{equation*}
i.e., $[E_1]=[E_3]$ in $\Ext^1_{K,\st}(W,V)$.

The above diagram is helpful because now we can apply $D_{\st}(-)$ to it and consider compatible splittings
\begin{equation*}
    \begin{tikzcd}
            0 \arrow[r] & D_{\st}(V\otimes_{\Q_p}W^\vee\otimes_{\Q_p}W)\arrow[r]\arrow[d, twoheadrightarrow,"\lambda"] &D_{\st}(E_2\otimes_{\Q_p}W) \arrow[r]\arrow[d, twoheadrightarrow,"\mu"] & D_{\st}(W)\arrow[l, bend right=30, "s"']\arrow[r]\arrow[d, equal] &0 \\
            0 \arrow[r] &D_{\st}(V) \arrow[r] &D_{\st}(E_1)  \arrow[r]\arrow[u,bend left=50, "t"] & D_{\st}(W) \arrow[l,bend left=30,"s_1"] \arrow[r]  &0
    \end{tikzcd}
\end{equation*}
such that $s=t\circ s_1$, where $t$ is a $K_0$-linear section of the surjection $\mu$ chosen appropriately.

Note that we must have $s(\omega)=(\alpha\otimes\omega)+\beta$ for some $\alpha\in D_{\st}(E_2)$ and some
\[
\beta\in\ker\big(D_{\st}(E_2\otimes_{\Q_p}W)\to D_{\st}(W)\big).
\]
This defines a section $s_2:K_0\to D_{\st}(E_2)$ by declaring $s_2(1)=\alpha$. Now it is clear that by making this choice (which we can make) we would have $s_1(\omega)=\mu(s_2(1)\otimes\omega)-\mu(\beta)$, so that it follows automatically (recall that $N(\omega)=0$ since we assume that $W$ is $K$-crystalline) that $N(s_1(\omega))=x-N(\mu(\beta))$ and $s_1(\omega)=y-\mu(\beta)$. Since
\[
\mu(\beta)\in\ker\big(D_{\st}(E_1)\to D_{\st}(W)\big),
\]
$[P(\varphi(\mu(\beta))),N(\mu(\beta)),\mu(\beta)]=0$ in $H^1_{\st,P}(V(n))$.

Hence, we are left to check that $P(\varphi)(s_1(\omega))=w-P(\varphi)(\mu(\beta))$, which follows directly from the following chain of equalities
\begin{align*}
\lambda \big(a(\varphi_1,\varphi_2)((1-\varphi)(s_2(1))\otimes\omega)\big)&=\mu \Big(\tfrac{P(\varphi_1\otimes\varphi_2)-P(\varphi_2)}{1-\varphi_1}\big((1-\varphi_1)s_2(1)\otimes\omega\big)\Big)=\\
&=\mu\big(P(\varphi_1\otimes\varphi_2)(s_2(1)\otimes\omega)\big)=\\
&=P(\varphi)\big(\mu(s_2(1)\otimes\omega)\big)=P(\varphi)(s_1(\omega))-P(\varphi)(\mu(\beta)).    
\end{align*}
\end{proof}

\section{Pullback reconstruction of diagonal classes}
\label{S4}
\subsection{Modular curves and automorphic coefficients}
For an integer $N >3$, let $Y_1(N)/\Spec{\Q}$ be the modular curve of level $\Gamma_1(N)$. Since we will usually fix the level, we will often denote this modular curve by $Y$ and let $\bar{Y} = Y_{\bar{\Q}}$. 
We denote the smooth compactification of $Y$ by $X$.
The complement of $Y$ in $X$ (the cusps) will be denoted by $D$. We note that if $p \nmid N$, then $Y$ has a smooth model over $\Z_{(p)}$.

There exists a universal elliptic curve $\pi \colon \Sh{E} \to Y$. Let $\omega_{\Sh{E}} = \pi_*\Omega_{\Sh{E}/Y}$ be the sheaf of invariant differentials. This sheaf has a canonical extension to $X$ which we still denote by $\omega_{\Sh{E}}$.
Let $\Sh{H}_{\Sh{E}} := R^1\pi_*\Omega^{\bullet}_{\Sh{E}/Y}$ be the relative de Rham sheaf which also canonically extends to $X$. There exists a Hodge filtration
\[
0 \to \omega_{\Sh{E}} \to \Sh{H}_{\Sh{E}} \to \omega_{\Sh{E}^{\vee}}^{\vee} \to 0.
\]
Using the principal polarisation $\Sh{E} \simeq \Sh{E}^{\vee}$, we identify $\omega_{\Sh{E}} \simeq \omega_{\Sh{E}^{\vee}}$.

For an algebraic representation $V$ of $\GL_{2,\Q}$, there is an associated vector bundle $\Sh{V}$ on $Y$. Our convention for this association is that the dual of the 2-dimensional standard representation $\St^{\vee}$ goes to $\Sh{H}_{\Sh{E}}$. We will often use the notation $V^{k}$ to denote the representation $\Sym^k\St^{\vee}$ and $\Sh{V}^k = \Sh{H}_{\Sh{E}}^k := \Sym^k\Sh{H}_{\Sh{E}}$. If we want to specify the infinity type of the central character we will write $V^{[k;w]}$ for $\Sym^k\St^{\vee} \otimes \det^{\tfrac{k-w}{2}}$, and the corresponding vector bundle is $\Sh{V}^{[k;w]} = \Sh{H}_{\Sh{E}}^{[k;w]}:= \Sym^k\Sh{H}_{\Sh{E}} \otimes (\wedge^2\Sh{H}_{\Sh{E}})^{\tfrac{w-k}{2}}$.

The vector bundle $\Sh{H}_{\Sh{E}}$ is equipped with the integrable Gauss--Manin connection $\nabla \colon \Sh{H}_{\Sh{E}} \to \Sh{H}_{\Sh{E}}\otimes \Omega_{X}(\log D)$ that satisfies Griffiths' tranversality and induces the Kodaira--Spencer map. The Kodaira--Spencer map is an isomorphism $\omega_{\Sh{E}}^2 \otimes (\wedge^2\Sh{H}_{\Sh{E}})^{-1} \simeq \Omega_{X/\Q}(\log D)$. The Gauss--Manin connection extends naturally to algebraic representations, and gives rise to the logarithmic de Rham complex
\begin{align*}
    \RG{\dR}(Y, \Sh{V}) = \RG{\dR}(X, \Sh{V}\langle D \rangle) := \RG{}\left(X, \left[ \Sh{V} \xrightarrow{\nabla} \Sh{V} \otimes \Omega_{X}(\log D)\right]\right).
\end{align*}
We also have the minus log de Rham complex
\begin{align*}
    \RG{\dR,c}(Y, \Sh{V}) = \RG{\dR}(X, \Sh{V}\langle -D \rangle) := \RG{}\left(X, \left[ \Sh{V}(-D) \xrightarrow{\nabla} \Sh{V}(-D) \otimes \Omega_{X}(\log D)\right]\right).
\end{align*}

\subsection{Pullback reconstruction of diagonal classes}
\label{subsection: pullback diagonal classes}
In this section, we briefly recall the definition of diagonal classes as defined in \cite{BSV_reciprocity_balanced}, whose Bloch--Kato logarithm recovers the values of the triple product $p$-adic $L$-function as defined in \cite{DR_diagonal_1} (for Hida families) or \cite{andreatta2021triple} (for Coleman families) in the so-called balanced region. The regulator formula in the Hida family case was already proven in \cite{DR_diagonal_1}, while for the case of Coleman families this was proven in the PhD thesis of the first named author \cite{Huang_tripleL_and_GZ_formula}. Both these results were proven assuming the $\GL_2(\Q_p)$-smooth representations corresponding to the three modular forms are unramified principal series. The PhD thesis of the last named author \cite{marannino2025explicitreciprocitylawsdiagonal} extended these results to allow the non-dominant weight modular forms to be supercuspidal at $p$. In the following we will revisit the regulator formula in the unramified principal series case via our new approach. In particular, we will assume that $p\nmid N$ in what follows.

For a tuple of natural numbers $(k_1, \dots, k_m)$, let $V^{[k_1, \dots, k_m]} := V^{k_1} \boxtimes \cdots \boxtimes V^{k_m}$ as a representation of $\GL_2(\Q_p)^m$ and $\Sh{V}^{[k_1,\dots, k_m]}$ be the corresponding \'etale local system on $Y^m$. 

Let $(k,l,m)$ be a \emph{balanced} triple of natural numbers (i.e., $(k+2,l+2,m+2)$ satisfies the triangle inequality). Let $r = (k+l+m)/2$ and assume $r\in\Z$. Then there is a unique $\GL_2(\Q_p)$-equivariant map \cite[95]{BSV_reciprocity_balanced}
\[
\det{\!}^r \colon  \Q_p \to V^k \otimes V^l \otimes V^m(r).
\]

Let $\Delta \colon Y \to Y^3$ be the diagonal embedding. Since $\Delta^!\Q_p = \Q_p(-2)[-4]$, the map above induces a pushforward map which fits into a distinguished triangle
\begin{equation}\label{E08}
\RG{\proet}(\bar{Y}, \Q_p) \xrightarrow{\Delta_*} \RG{\proet}(\bar{Y}^3, \Sh{V}^{[k,l,m]}(r+2))[4] \to \MF{(\Delta_*)[1]} \xrightarrow{+1}
\end{equation}
where we write $\MF{(\Delta_*)[1]}$ for the unique (up to non-unique isomorphism) object in the derived category making the sequence a distinguished triangle. The corresponding long exact sequence gives rise to a short exact sequence
\[
0 \to H^3_{\proet}(\bar{Y}^3, \Sh{V}^{[k,l,m]}(r+2)) \to H^0(\MF(\Delta_*)) \to H^0_{\proet}(\bar{Y}, \Q_p) \to 0.
\]
The following lemma and the corollary implies that $H^{0}(\MF{}(\Delta_*))$ is a continuous $\Gal(\bar{\Q}/\Q)$-representation.

\begin{lemma}\label{L22}
    Let $S = \Spec{\Q}$. The subcategory $\mathrm{Loc}^{\proet}_S(L) \subset \mathrm{Shv}_{\proet}(S, L)$ of pro\'etale $L$-local systems for any finite extension $L/\Q_p$ is equivalent to the category of continuous $\Gal(\bar{\Q}/\Q)$-representations that are finite dimensional over $L$. Moreover, this is a weak Serre subcategory.
\end{lemma}

\begin{proof}
    An $L$-local system is by definition locally constant for the pro\'etale topology. Hence by \cite[Example 7.3.5]{Bhatt-Scholze} it descends to an \'etale local system. The second claim follows from \cite[Corollary 6.8.5]{Bhatt-Scholze} and \cite[Lemma 18.43.5]{stacks-project}.
\end{proof}

\begin{cor}
    The subcategory $D^+_{\mathrm{Loc}_S^{\proet}(L)}(\mathrm{Shv}_{\proet}(S, L)) \subset D^+_{\proet}(S, L)$ of objects $X \in D^+_{\proet}(S, L)$ satisfying $H^n(X) \in \mathrm{Loc}^{\proet}_S(L)$ for all $n \in \Z$ is a strictly full saturated triangulated subcategory.
\end{cor}

\begin{proof}
    Follows from Lemma \ref{L22} and \cite[Lemma 13.17.1]{stacks-project}.
\end{proof}

Fix three cuspidal eigenforms $f,g,h$ of level $\Gamma_1(N)$ (not necessarily new of level $N$), weights $k+2, l+2, m+2$, and nebentypus characters $\chi_f, \chi_g, \chi_h$ respectively. Assume that the product $\chi_f \cdot \chi_g \cdot \chi_h$ is the trivial Dirichlet character modulo $N$. This should be thought of as a self-duality condition. In particular, it ensures that $r=(k+l+m)/2$ is a non-negative integer.

\begin{defn}
\label{def: Galoisreps modular forms}
For an eigenform $\xi\in S_{\nu}(N,\chi_{\xi}, L)$, we let $V_N(\xi)\subset H^1_{\proet, c}(\bar{Y}, \Sh{V}^k) \otimes L$ be maximal subspace on which the generalised eigenvalue of $T_{\ell}$ for any $\ell \nmid pN$ is given by the trace of the arithmetic Frobenius at $\ell$. 
If $\xi$ is new of level $N$, then $V_N(\xi)$ coincides with the Galois representation attached to $\xi$ by Deligne.
For what follows, the coefficient field $L$ will not play any significant role, and we will assume for simplicity of notation that $L = \Q_p$. We let $H^1_{\proet}(\bar{Y}, \Sh{V}^k(k+1)) \twoheadrightarrow V_N^*(\xi)$ be the dual. When $N$ is clearly from the context, we omit it from the notation.
\end{defn}

Using $H^0_{\proet}(\bar{Y}, \Q_p) = \Q_p$, and pushing out the exact sequence along 
\[
H^3_{\proet}(\bar{Y}, \Sh{V}^{[k,l,m]}(r+2)) \to V^*(f) \otimes V^*(g) \otimes V^*(h)(-r-1) =:V^*_{fgh}(-r-1)
\]
we get an extension class
\begin{equation}\label{E09}
0 \to V^*_{fgh}(-r-1) \to E_{k(f,g,h)} \to \Q_p \to 0
\end{equation}
which by Lemma \ref{L201} coincides with the diagonal class defined in \cite[Section 3]{BSV_reciprocity_balanced}.

Recall the square-root unbalanced triple product $p$-adic $L$-function $\sh{L}_p^{\mathbf{f}}(\mathbf{f}, \mathbf{g}, \mathbf{h})$ associated with a triple of Hida (or Coleman) families. It interpolates central critical $L$-value of $L(V(\mathbf{f}_x) \otimes V(\mathbf{g}_y) \otimes V(\mathbf{h}_z), s)$ in the range where $x \geq y +z$, i.e. the specialisation of $\mathbf{f}$ has the dominant weight. Since we are interested in relating the Bloch--Kato logarithm of $[E_{k(f,g,h)}]$ to the special values of $\sh{L}_p^{\mathbf{f}}(\mathbf{f}, \mathbf{g}, \mathbf{h})$ at a balanced triple $(k, l, m)$, it will be convenient to write the extension class as 
\begin{equation}\label{E10}
0 \to V^*(f)(-r-1) \to \tilde{E}_{k(f,g,h)} \to V(g) \otimes V(h) \to 0.
\end{equation}
We note that $V(g) \otimes V(h) \subset H^2_{\proet,c}(\bar{Y}^2, \Sh{V}^{[l,m]})$, and $H^1_{\proet}(\bar{Y}, \Sh{V}^k(k-r)) \twoheadrightarrow V^*(f)$.

In the following we drop the ``$\proet$" from the notation for simplicity.
\begin{prop}
\label{proposition: diagonal pullback}
    \begin{enumerate}
        \item There is a unique (up to multiplication by a non-zero scalar) $\GL_2(\Q_p)$-equivariant map $\CG_{r-k} \colon  V^l\otimes V^m \to V^k(k-r)$ that induces a map 
        \[
        \RG{c}(\bar{Y}^2, \Sh{V}^{[l,m]}) \xrightarrow{\CG^*_{r-k}} \RG{}(\bar{Y}, \Sh{V}^k(k-r)).
        \]
        \item The exact sequence (\ref{E10}) appears in the long exact sequence associated with the distinguished triangle completing the map above
        \begin{equation}
        \label{disttriang-diagonalpullback}
        \RG{c}(\bar{Y}^2, \Sh{V}^{[l,m]}) \xrightarrow{\CG^*_{r-k}} \RG{}(\bar{Y}, \Sh{V}^k(k-r)) \to \MF(\CG^*_{r-k})[1] \xrightarrow{+1}.   
        \end{equation}
    \end{enumerate} 
\end{prop}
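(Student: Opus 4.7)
For part (1), my plan is to invoke the classical $\GL_2$ Clebsch--Gordan decomposition $V^l \otimes V^m \cong \bigoplus_{j=0}^{\min(l,m)} V^{l+m-2j} \otimes \det^{j}$. The balanced condition together with $k \leq l+m$ ensures $j = r-k$ lies in the allowable range and singles out a unique multiplicity-one summand isomorphic to $V^k \otimes \det^{r-k} \cong V^k(k-r)$ (under the identification $\det \cong \Q_p(-1)$ coming from the Weil pairing on $\Sh{E}$). Schur's lemma then yields $\CG_{r-k}$ uniquely up to scalar, which we normalise by the standard Clebsch--Gordan coupling. For the cohomological map, observe that $\Delta^*\Sh{V}^{[l,m]} = \Sh{V}^l \otimes \Sh{V}^m$ along the diagonal $\Delta\colon\bar{Y} \hookrightarrow \bar{Y}^2$, so one takes $\CG^*_{r-k}$ to be the composition
\begin{equation*}
\RG{\et,c}(\bar{Y}^2, \Sh{V}^{[l,m]}) \to \RG{\et}(\bar{Y}^2, \Sh{V}^{[l,m]}) \xrightarrow{\Delta^*} \RG{\et}(\bar{Y}, \Sh{V}^l \otimes \Sh{V}^m) \xrightarrow{\CG_{r-k}} \RG{\et}(\bar{Y}, \Sh{V}^k(k-r)).
\end{equation*}

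For part (2), I would extract the long exact sequence of cohomology from (\ref{disttriang-diagonalpullback}). Since $\bar{Y}$ is a smooth affine curve over $\bar{\Q}$ and $k \geq 1$, a standard affine-dimension / weight argument gives $H^0_{\et}(\bar{Y}, \Sh{V}^k(k-r)) = H^2_{\et}(\bar{Y}, \Sh{V}^k(k-r)) = 0$; K\"unneth combined with $l,m \geq 1$ forces $H^i_{\et,c}(\bar{Y}^2, \Sh{V}^{[l,m]}) = 0$ for $i \leq 1$. The long exact sequence collapses to
\begin{equation*}
0 \longrightarrow H^1_{\et}(\bar{Y}, \Sh{V}^k(k-r)) \longrightarrow H^2(\MF(\CG^*_{r-k})) \longrightarrow H^2_{\et,c}(\bar{Y}^2, \Sh{V}^{[l,m]}) \longrightarrow 0.
\end{equation*}
Pulling back along the K\"unneth inclusion $V(g) \otimes V(h) \hookrightarrow H^2_{\et,c}(\bar{Y}^2, \Sh{V}^{[l,m]})$ and pushing out along the quotient $H^1_{\et}(\bar{Y}, \Sh{V}^k(k-r)) \twoheadrightarrow V^*(f)(-r-1)$ yields a candidate extension $\tilde{E}' \in \Ext^1_{G_{\Q}}(V(g) \otimes V(h), V^*(f)(-r-1))$ of the shape claimed.

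The main obstacle will be identifying $\tilde{E}'$ with $\tilde{E}_{k(f,g,h)}$ as originally produced by the pushforward triangle (\ref{E08}). My plan is to apply Lemma \ref{L201} to compare the two presentations, exploiting two structural dualities. First, the Clebsch--Gordan inclusion $\det^r \colon \Q_p \to V^{[k,l,m]}(r)$ and the projection $\CG_{r-k}$ are adjoint under $\Hom_{\GL_2}(\Q_p, V^k \otimes V^l \otimes V^m(r)) \cong \Hom_{\GL_2}(V^l \otimes V^m, V^k(k-r))$, via the self-duality $V^{k,\vee} \cong V^k(k)$. Second, the diagonal pushforward $\Delta_*$ in the last two factors of $Y^3 = Y \times Y^2$ is Verdier dual to the diagonal pullback $\Delta^* \colon \RG{\et,c}(\bar{Y}^2, -) \to \RG{\et}(\bar{Y}, -)$. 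Combined with the natural isomorphism
\begin{equation*}
\Ext^1_{G_{\Q}}(\Q_p, V^*_{fgh}(-r-1)) \xrightarrow{\sim} \Ext^1_{G_{\Q}}(V(g) \otimes V(h), V^*(f)(-r-1))
\end{equation*}
(obtained by tensoring with $(V(g) \otimes V(h))^\vee$ and contracting via Poincar\'e duality on the modular curves), these dualities deliver $\tilde{E}' = \tilde{E}_{k(f,g,h)}$. The trickiest part is bookkeeping the signs, Tate twists and normalisations in this duality argument; as a fallback, one could bypass Lemma \ref{L201} and compute both extension classes directly as Galois cocycles via Hochschild--Serre, though this is considerably more laborious.
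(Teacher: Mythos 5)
Part (1) of your proposal is essentially the paper's argument: you use the classical Clebsch--Gordan decomposition and Schur's lemma where the paper cites the spherical-pair property of the diagonal $\GL_2 \hookrightarrow \GL_2\times\GL_2$, but these are equivalent routes to multiplicity one, and the definition of $\CG^*_{r-k}$ as a composite through $\Delta_2^*$ matches.

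Part (2) is where there is a genuine gap. You correctly identify the short exact sequence coming from the mapping fibre and the pullback/pushout needed to produce a candidate extension $\tilde{E}'$, and you correctly flag the identification $\tilde{E}' = \tilde{E}_{k(f,g,h)}$ as the crux. But your argument for that identification is a heuristic duality sketch rather than a proof, and the tools you name are not quite the right ones. Invoking Lemma \ref{L201} does not help here: that lemma relates the edge map of a Grothendieck spectral sequence (Hochschild--Serre, in practice) to the boundary map of a mapping-fibre long exact sequence, whereas what you need is to compare two \emph{different} mapping-cone presentations of the same extension, one coming from the pushforward triangle \eqref{E08} and one from the pullback triangle \eqref{disttriang-diagonalpullback}. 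The paper handles this by first Verdier-dualizing \eqref{E08} to get the triangle built on $\Delta^* \colon \RG{c}(\bar{Y}^3, \Sh{V}^{[k,l,m]}) \to \RG{c}(\bar{Y},\Q_p(-r))$, then taking derived tensor with $\RG{}(\bar{Y},\Sh{V}^k(k+1))$ and using K\"unneth together with the Poincar\'e-duality coevaluation and the fundamental class to produce an explicit map $\RG{c}(\bar{Y}^2,\Sh{V}^{[l,m]})[-2] \to \RG{}(\bar{Y},\Sh{V}^k(k-r))[-2]$; the final and essential step is showing that this map equals $\CG^*_{r-k}[-2]$, which the paper proves by applying proper base change (the identities $p_{1!}\,p_2^* = p^*q_!$ and $p_2^*q^! = p_1^! p^*$) to the Cartesian square relating $Y$, $Y\times Y^2$, $Y\times Y$, and $Y^2\times Y^2$. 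Your ``adjointness of $\det^r$ and $\CG_{r-k}$'' and ``Verdier duality of $\Delta_*$ and $\Delta^*$'' are the right ingredients in spirit, but without the base-change computation on that square you have not actually verified that the two constructions produce the same class. In particular, one must be careful that the pushforward in \eqref{E08} is along the diagonal $Y\hookrightarrow Y^3$ (codimension $2$), while the pullback in \eqref{disttriang-diagonalpullback} is along $Y\hookrightarrow Y^2$ (codimension $1$), so these are not directly Verdier dual to one another; the paper's Cartesian square is precisely what bridges this mismatch.
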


\begin{proof}
    We first address (1). The fact that there is a unique such map follows from the fact that $\GL_2 \subset \GL_2 \times \GL_2$ (diagonally embedded) is a spherical pair which ensures that any irreducible $\GL_2$ representation occurs with multiplicity at most 1 inside an irreducible $\GL_2 \times \GL_2$ representation. Dualising the map $\det{\!}^r$ and using the fact that $(V^k)^* \simeq V^k(k)$, we get the required map $\CG_{r-k} \colon V^l \otimes V^m \to V^k(k-r)$. Letting $\Delta_2 \colon Y \to Y^2$ be the diagonal embedding, and noting that $\Delta_2^*(\Sh{V}^{[l,m]}) = \Sh{V}^l \otimes \Sh{V}^m$, we define (the second arrow being the natural map from compactly supported cohomology to cohomology without support conditions):
    \[
    \CG_{r-k}^* := \CG_{r-k} \circ \Delta_2^* \colon \RG{c}(\bar{Y}^2, \Sh{V}^{[l,m]}) \xrightarrow{} \RG{c}(\bar{Y}, \Sh{V}^k(k-r))\to \RG{}(\bar{Y}, \Sh{V}^k(k-r)).
    \]
    
    Before proving (2) we first make the following remark. Consider the map $\Delta_*$ as in (\ref{E08}). Dualising the triangle, we get another triangle
    \[
    \RG{c}(\bar{Y}^3, \Sh{V}^{[k,l,m]}) \xrightarrow{\Delta^*} \RG{c}(\bar{Y}, \Q_p(-r)) \to \MF(\Delta^*)[1] \xrightarrow{+1}
    \]
    Taking derived tensor product with $\RG{}(\bar{Y}, \Sh{V}^k(k+1))$ we get another distinguished triangle completing the map 
    \[
    \RG{c}(\bar{Y}^3, \Sh{V}^{[k,l,m]})\otimes^{\mathbf{L}} \RG{}(\bar{Y}, \Sh{V}^k(k+1))\xrightarrow{\Delta^*\otimes^{\mathbf{L}}\id} \RG{c}(\bar{Y}, \Q_p(-r)) \otimes^{\mathbf{L}}\RG{}(\bar{Y}, \Sh{V}^k(k+1))
    \]
    Using the K\"unneth formula, the term on the left decomposes as 
    \[
    \RG{c}(\bar{Y}^3, \Sh{V}^{[k,l,m]})\otimes^{\mathbf{L}} \RG{}(\bar{Y}, \Sh{V}^k(k+1)) = \RG{c}(\bar{Y}^2, \Sh{V}^{[l,m]}) \otimes^{\mathbf{L}} \RG{c}(\bar{Y}, \Sh{V}^k) \otimes^{\mathbf{L}}\RG{}(\bar{Y}, \Sh{V}^k(k+1)). 
    \]
    The pushforward of the natural map $\Q_p \to \Sh{V}^k \otimes \Sh{V}^k(k)$ along the diagonal $\Delta_2 \colon Y \to Y^2$ induces a map
    \[
    \Q_p[-2] \to \RG{}(\bar{Y}, \Q_p)[-2] \to \RG{c}(\bar{Y}, \Sh{V}^k) \otimes^{\mathbf{L}}\RG{}(\bar{Y}, \Sh{V}^k(k+1))
    \]
    which is the dual of the Poincar\'e duality pairing. 

    On the other hand, the fundamental class gives us a map
    \[
    \RG{c}(\bar{Y}, \Q_p(-r)) \to \Q_p(-r-1)[-2].
    \]
    Hence we get a map 
    \begin{equation}\label{E11}
    \RG{c}(\bar{Y}^2, \Sh{V}^{[l,m]})[-2] \xrightarrow{} \RG{}(\bar{Y}, \Sh{V}^k(k-r))[-2]
    \end{equation}
    that fits in the following diagram
    \[\begin{tikzcd}
	{\RG{c}(\bar{Y}^2, \Sh{V}^{[l,m]}) \otimes^{\mathbf{L}} \RG{c}(\bar{Y}, \Sh{V}^k) \otimes^{\mathbf{L}}\RG{}(\bar{Y}, \Sh{V}^k(k+1))} & {\RG{c}(\bar{Y}, \Q_p(-r)) \otimes^{\mathbf{L}}\RG{}(\bar{Y}, \Sh{V}^k(k+1))} \\
	{\RG{c}(\bar{Y}^2, \Sh{V}^{[l,m]}) \otimes^{\mathbf{L}}\Q_p[-2]} & {\Q_p[-2]\otimes^{\mathbf{L}}\RG{}(\bar{Y}, \Sh{V}^k(k-r))}
	\arrow[from=1-1, to=1-2, "{\Delta^*\otimes \id}"]
	\arrow[from=1-2, to=2-2]
	\arrow[from=2-1, to=1-1]
	\arrow[from=2-1, to=2-2]
\end{tikzcd}\]
A simple bookkeeping argument using K\"unneth formula shows that the long exact sequence associated with the distinguished triangle obtained from the map (\ref{E11}) gives rise to the exact sequence (\ref{E10}). Indeed, to obtain (\ref{E10}) from (\ref{E09}) we can first dualize, then tensor with $V^*(f)(-r-1)$, and pullback along the natural map $\Q_p \to V^*(f) \otimes V(f)$. The diagram above is just a derived version of this process which shows that (\ref{E10}) appears in the long exact sequence associated with the triangle completing (\ref{E11}).

Therefore in order to prove (2), we only need to show that the map in (\ref{E11}) coincides with $\CG_{r-k}^*[-2]$. We claim that this follows by applying proper base change to the following Cartesian diagram.
\[\begin{tikzcd}
	Y & {Y \times Y^2} \\
	{Y \times Y} & {Y \times Y^3 = Y^2 \times Y^2}
	\arrow["p_2", from=1-1, to=1-2]
	\arrow["p_1", from=1-1, to=2-1]
	\arrow["{\Delta_2 \times \id}", from=1-2, to=2-2]
	\arrow["{\id \times \Delta}"', from=2-1, to=2-2]
\end{tikzcd}\]
For simplicity of notation, we write $q = \Delta_2 \times \id$, and $p = \id \times \Delta$. Then the claim we want to prove is that 
\[
p^*q_!(\Q_p \boxtimes \Sh{V}^{[l,m]})[-2] \to p^*(\Sh{V}^{[k,k]}(k+1)\boxtimes \Sh{V}^{[l,m]}) = p^*(\Sh{V}^k(k+1)\boxtimes \Sh{V}^{[k,l,m]}) \to \Sh{V}^k(k+1)\boxtimes \Q_p(-r)
\]
coincides with 
\[{p_1}_!(\Sh{V}^l\otimes\Sh{V}^m)[-2] \to {p_1}_!\Sh{V}^k(k-r)[-2] = {p_1}_!p_1^!(\Sh{V}^k(k+1)\boxtimes\Q_p(-r)) \to \Sh{V}^k(k+1)\boxtimes \Q_p(-r).\]
But this follows easily from the relations ${p_1}_!p_2^* = p^*q_!$ and $p_2^*q^! = p_1^!p^*$. This completes the proof.
\end{proof}

\subsection{On comparison isomorphisms}
\label{section5}
In this section we base change our entire setup from $\Spec{\Q}$ to $\Spec{\Q_p}$. In particular, $\bar{Y}$ will now denote $Y_{\bar{\Q}_p}$.

Consider the ``trivial filtration" functor $D^+_{\proet}(\Spec{\Q_p}, \Q_p) \to DF^+_{\proet}(\Spec{\Q_p}, \Q_p)$ sending a complex $C^{\bullet}$ to the filtered complex satisfying $F^{i}(C^{\bullet}) = C^{\bullet}$ if $i \leq 0$, and 0 otherwise. This is an exact functor. Therefore we can view the distinguished triangle \ref{disttriang-diagonalpullback} as a distinguished triangle in $DF^+_{\proet}(\Spec{\Q_p}, \Q_p)$ after base changing everything from $\Spec{\Q}$ to $\Spec{\Q_p}$.

\begin{lemma}\label{de rham comparison}
    The continuous $\Gal(\bar{\Q}_p/\Q_p)$-representations $H^{\bullet}(\MF{(\CG^*_{r-k})})$ are de Rham.
\end{lemma}

\begin{proof}
    Consider the de Rham mapping fibre $\MF_{\dR}(\CG^*_{r-k}) \in DF^+(\Q_p)$ in the filtered derived category of $\Q_p$-vector spaces associated with the pullback $\RG{\dR, c}({Y}^2, \Sh{V}^{[l,m]}) \to \RG{\dR}({Y}, \Sh{V}^k(k-r))$ of de Rham complexes endowed with Hodge filtration. Via the ``constant sheaf" functor followed by tensoring with $B_{\dR}$ we get a distinguished triangle in $DF^+_{\proet}(\Spec{\Q_p}, B_{\dR})$ as follows.
    \[
    \RG{\dR,c}({Y}^2, \Sh{V}^{[l,m]})\otimes^{\mathbf{L}}B_{\dR} \xrightarrow{\CG^*_{r-k}} \RG{\dR}({Y}, \Sh{V}^k(k-r)) \otimes^{\mathbf{L}} B_{\dR} \to \MF_{\dR}(\CG^*_{r-k})[1] \otimes^{\mathbf{L}} B_{\dR} \xrightarrow{+1}.
    \]
    The lemma then follows from the main results of \cite{DLLZ}, \cite{LLZ} which proves the comparison isomorphisms 
    \begin{align*}
    \RG{\proet,c}(\bar{Y}^2, \Sh{V}^{[l,m]}) \otimes^{\mathbf{L}} B_{\dR} &\simeq \RG{\dR,c}({Y}^2, \Sh{V}^{[l,m]}) \otimes^{\mathbf{L}} B_{\dR}, \\
    \RG{\proet}({Y}, \Sh{V}^{k}(k-r)) \otimes^{\mathbf{L}} B_{\dR} &\simeq \RG{\dR}({Y}, \Sh{V}^{k}(k-r)) \otimes^{\mathbf{L}} B_{\dR}
    \end{align*}
    in $DF^+_{\proet}(\Spec{\Q_p}, B_{\dR})$.
\end{proof}

\begin{rem}
Note that the above lemma, together with the isomorphism \eqref{extiso}, allows to (re)prove that the diagonal class $\kappa(f,g,h)$ is de Rham at $p$, i.e., its restriction at $p$ lies in the Bloch--Kato subspace $H^1_g(\Q_p, V^*_{fgh}(-r-1))$. In fact, in our setting -- where $f,g,h$ are crystalline at $p$ -- one checks directly that $H^1_e=H^1_f=H^1_g$ for the three Bloch--Kato conditions for the representation $V^*_{fgh}(-r-1)$ (see \cite[Lemma 3.5]{BSV_reciprocity_balanced}), so $\kappa(f,g,h)$ is in fact crystalline at $p$ (and correspondingly that the extension $\tilde{E}_{\kappa(f,g,h)}$ is crystalline).
\end{rem}

\section{Regulator formula for diagonal classes}
\label{S5}
\subsection{Statement of the $p$-adic regulator formula}
In this section we formulate the statement of the regulator formula for diagonal classes and include its new proof via the formalism developed in the previous sections.

Let $f,g,h$ be three normalized eigenforms of level $\Gamma_1(N)$ and respective weights $k+2,l+2,m+2$. As in Section \ref{subsection: pullback diagonal classes}, we assume that $(k,l,m)$ is a balanced triple of natural numbers and that the product of the nebentypes of $f,g,h$ is trivial, which implies that $r=(k+l+m)/2$ is an integer.

We will also assume that $f$ is new of level $N_1$ for some $N_1\mid N$.

We fix a finite extension $L$ of $\Q_p$ containing all the Fourier coefficients of $f,g,h$ and a primitive $N$-th root of unity. It will be our field of coefficients and for all cohomology groups and complexes appearing below we will always implicitly extend scalars to $L$. From now on in this section we write $Y=Y_1(N)_{\Q_p}$, $X=X_1(N)_{\Q_p}$.

For an eigenform $\xi\in S_\nu(N,\chi_{\xi},L)$ (with $\nu\geq 2$) and $?\in\{*,\emptyset\}$, we set
\[
V^?_{\dR,N}(\xi):=D_{\dR}(V^?_{N}(\xi))=H^0(\Q_p,B_{\dR}\otimes_L V^?_N(\xi))
\]
and we simply write $V^?_{\dR}(\xi)$ if the level $N$ is understood.

Faltings--Tsuji comparison isomorphism yields canonical isomorphisms
\begin{equation}
\label{fil0fil1}
    \Fil^0V^*_{\dR}(\xi)\cong S_\nu(\Gamma_1(N),L)[\xi^w]\qquad \Fil^1 V_{\dR}(\xi)\cong S_\nu(\Gamma_1(N),L)[\xi]
\end{equation}
(where $\xi^w:=w_{N}(\xi)$ for $w_N$ the Atkin--Lehner involution) and a perfect duality
\begin{equation}
\label{dRduality}
\langle -,-\rangle_\xi: V_{\dR}(\xi)\otimes_L V^*_{\dR}(\xi)\to D_{\dR}(L)=L,
\end{equation}
under which we get identifications
\begin{equation}
\label{vdrsufil1}
    V_{\dR}(\xi)/\Fil^1 V_{\dR}(\xi)\cong (S_\nu(\Gamma_1(N),L)[\xi^w])^*
\end{equation}
and
\begin{equation}
    V^*_{\dR}(\xi)/\Fil^0 V^*_{\dR}(\xi)\cong (S_\nu(\Gamma_1(N),L)[\xi])^*,
\end{equation}
where $(-)^\ast$ denotes the $L$-dual of an $L$-vector space.

We let $\omega_g\in\Fil^1 V_{\dR, N}(g)$ be the element corresponding to $g$ under the isomorphism \eqref{fil0fil1}. We will use the same notation $\omega_g$ to denote the holomorphic differential in $H^0(X,\Sh{V}^l\otimes\Omega_{X}(\log D))$ (or in the minus log variant) corresponding to $g$. The same applies to $h$.

We set our convention for the Petersson inner product on the spaces $S_\nu(N,\chi)$ of complex modular forms of level $N$ and character $\chi$ to be
\[
\langle \xi_1, \xi_2\rangle_{\mathrm{Pet}} :=\frac{1}{\mathrm{Vol}(\Sh{H}/\Gamma_0(N))}\int_{\Sh{D}_0(N)}\xi_1(\tau)\overline{\xi_2(\tau)}y^\nu\frac{dxdy}{y^2}
\]
for $\xi_1, \xi_2\in S_\nu(N,\chi)$, where we write $\tau=x+iy\in\Sh{H}$ (the upper half-plane) and $\Sh{D}_0(N)$ is a fundamental domain for the action of $\Gamma_0(N)$ on $\Sh{H}$. In this way, the Petersson product does not depend on the level considered.

We let $\eta_{f}\in V_{\dR,N}(f)/\Fil^1 V_{\dR,N}(f)$ be the element corresponding under the isomorphism \eqref{vdrsufil1} to the linear functional
\begin{equation}
S_{k+2}(\Gamma_1(N),L)[f^w]\to L\qquad \gamma\mapsto\frac{\langle \Tr_{N/N_1}(\gamma), f^w\rangle_{\mathrm{Pet}}}{\langle f,f\rangle_{\mathrm{Pet}}}=[\Gamma_0(N_1):\Gamma_0(N)]\cdot \frac{\langle \gamma, f^w\rangle_{\mathrm{Pet}}}{\langle f,f\rangle_{\mathrm{Pet}}} 
\end{equation}

\begin{rem}
The fact the this linear functional actually takes values in $L$ follows from the work of Hida (cf. \cite[Proposition 4.5]{Hi1985a}).
\end{rem}

Let $\alpha_f,\beta_f$ be the roots of the $p$-th Hecke polynomial for $f$, or equivalently the eigenvalues of crystalline Frobenius on $V_{\dR}(f)=V_{\cris}(f)$ (and similarly for $g$ and $h$). We assume that $\ord_p(\alpha_f)<k+1$, so that we have a decomposition
\[
V_{\dR}(f)=\Fil^1 V_{\dR}(f)\oplus V_{\dR}(f)^{\varphi=\alpha_f}
\]
and we can define $\eta_f^{\alpha}\in V_{\dR}(f)$ as the unique lift of $\eta_f$ to the $\alpha_f$-eigenspace for $\varphi$,

We thus obtain an element $\eta_f^\alpha\otimes\omega_g\otimes\omega_h\otimes e_{r+2}\in\Fil^0 D_{\dR}(V_{fgh}(r+2))$, where $V_{fgh}=V(f)\otimes V(g)\otimes V(h)$ and $e_{r+2}$ is the generator of $D_{\dR}(\Q_p(r+2))$ defined as in Section \ref{section: onBKlog}.

As in \cite[\S 1.1]{BSV2020a}, we can obtain a $p$-adic modular form of weight $k$ out of $g$ and $h$ by the formula
\[
\Xi_k(g,h)=d^{(k-l-m-2)/2}(g^{[p]})\times h,
\]
where $d=q\tfrac{d}{dq}$ is Serre's derivative operator on $p$-adic modular forms and $g^{[p]}$ is the $p$-depletion of $g$ (it is necessary to pass to the $p$-depletion to be able to apply negative powers of $d$ defined via a $p$-adic limit process). We let $f_\alpha$ denote the $p$-stabilization of $f$ with $U_p$-eigenvalue $\alpha_f$ and $f^w_{\alpha}$ be the $p$-stabilization of $f^w$ with $U_p$-eigenvalue $\chi_f(p)^{-1}\alpha_f$. As explained in \cite[\S 1.1]{BSV2020a}, one has an overconvergent $f^w_\alpha$-isotypic projectior $e_{f^w_\alpha}$ on nearly overconvergent modular forms. In fact, we will (re)prove that $\Xi_k(g,h)$ is nearly overconvergent, so that it makes sense to apply $e_{f^w_\alpha}$ to it.

We can finally state the $p$-adic regulator formula for diagonal classes.

\begin{theorem}[cf. Theorem A in \cite{BSV2020a} and Theorem 1.3 in \cite{DR_diagonal_1}]
\label{theorem: diagonal regulator}
Under the assumptions made above, the value of
\[
\log_{\BK}(\mathrm{res}_p(\kappa(f,g,h)))(\eta_f^\alpha\otimes\omega_g\otimes\omega_g\otimes e_{r+2})
\]
is given by
\[
\frac{(-1)^k(r-k)!\Big(1-\frac{\beta_f}{\alpha_f}\Big)\Big(1-\frac{\beta_f}{p\alpha_f}\Big)[\Gamma_0(N_1):\Gamma_0(N)]}{\Sh{E}_p(f,g,h)}\cdot \frac{\big\langle e_{f^w_\alpha}(\Xi_k(g,h)),f^w_\alpha\big\rangle_{\mathrm{Pet}}}{\big\langle f^w_\alpha, f^w_\alpha\big\rangle_{\mathrm{Pet}}},
\]
where
\[
\Sh{E}_p(f,g,h)=\Bigg(1-\frac{\beta_f\alpha_g\alpha_h}{p^{r+2}}\Bigg)\Bigg(1-\frac{\beta_f\alpha_g\beta_h}{p^{r+2}}\Bigg)\Bigg(1-\frac{\beta_f\beta_g\alpha_h}{p^{r+2}}\Bigg)\Bigg(1-\frac{\beta_f\beta_g\beta_h}{p^{r+2}}\Bigg).
\]
\end{theorem}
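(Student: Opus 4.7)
The strategy follows that of Section \ref{S5} but is substantially simpler, since the pullback reformulation of Proposition \ref{proposition: diagonal pullback} lets us carry out the central computation on the (compactified) modular curve $X$ rather than on a surface. First, invoking Proposition \ref{proposition: diagonal pullback}, we realize $\tilde{E}_{k(f,g,h)}$ as the extension appearing in the long exact sequence of \eqref{disttriang-diagonalpullback}. We then apply the derived de Rham and semistable comparison theorems (as in Theorem \ref{T03}), using functoriality of $\CG_{r-k}$ (which is induced by a morphism of relative Chow motives, cf.\ \cite[Prop.~6.2.6]{LSZ}), to transport the entire picture to the derived category of filtered $(\varphi,N)$-modules, and further to rigid cohomology on suitable dagger compactifications of $Y$ and $Y^2$.

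Next, we apply Proposition \ref{Prop: BKlog2} with $V=V^*(f)(-r-1)$, $W=V(g)\otimes V(h)$ and $\omega=\omega_g\otimes\omega_h$. We choose a polynomial $P(T)\in 1+T\cdot L[T]$ with $P(\varphi)$ annihilating $\omega_g\otimes\omega_h$ in $D_{\cris}(W)$; the natural choice is a normalization of the characteristic polynomial of $\varphi$ on $D_{\cris}(W)$, whose reciprocal roots are the products $\varepsilon_g\varepsilon_h$ for $\varepsilon\in\{\alpha,\beta\}$. Building analogues of the syntomic diagrams of Propositions \ref{P29} and \ref{P41} for the triangle \eqref{disttriang-diagonalpullback}, the evaluation of $\log_{\BK}$ becomes the $\CG^*_{r-k}$-pushforward, in a $P$-syntomic complex, of a specific lift of the de Rham class attached to $g\boxtimes h$. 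Restricting to the dagger ordinary locus $(X^{\ord,\dagger})^2\hookleftarrow X^{\ord,\dagger}$ using higher Coleman classicality together with the small-slope hypothesis $\ord_p(\alpha_f)<k+1$, we reduce to producing a single rigid primitive on this locus.

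The key observation is that this primitive is essentially $\Xi_k(g,h)=d^{(k-l-m-2)/2}(g^{[p]})\times h$: up to Euler factors, $P(\varphi)(\omega_g\otimes\omega_h)$ agrees with the coherent class attached to $g^{[p]}\boxtimes h$, and Serre's operator $d=q\tfrac{d}{dq}$ is invertible on $p$-depleted $q$-expansions, confirming en passant that $\Xi_k(g,h)$ is nearly overconvergent. Pulling this primitive back along $\Delta\colon X^{\ord,\dagger}\to(X^{\ord,\dagger})^2$, applying $\CG_{r-k}$ (whose action on $q$-expansions admits the explicit description used in Section \ref{S5}), and pairing with $\eta_f^\alpha$ via Serre duality on the dagger modular curve (with the overconvergent projector $e_{f^w_\alpha}$ appearing naturally from the $\varphi$-eigenvector condition defining $\eta_f^\alpha$) produces the Petersson ratio in the statement. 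The main obstacle is the bookkeeping of Euler factors: $\Sh{E}_p(f,g,h)$ arises from comparing $P(\varphi)(\omega_g\otimes\omega_h)$ with the class of $g^{[p]}\boxtimes h$; the factors $(1-\beta_f/\alpha_f)(1-\beta_f/(p\alpha_f))$ from the cup-product recipe of Proposition \ref{Prop: BKlog2} together with the transition from $\eta_f^\alpha$ to a usable $K_0$-linear section; and the combinatorial factor $(-1)^k(r-k)!\,[\Gamma_0(N_1):\Gamma_0(N)]$ from the Clebsch-Gordan $q$-expansion formula and the normalization of $\eta_f$.
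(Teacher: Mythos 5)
Your proposal follows essentially the same strategy as the paper: Proposition \ref{proposition: diagonal pullback} plus Proposition \ref{Prop: BKlog2} with $P=P_{gh}$, reduction to a rigid primitive over the dagger ordinary locus, pullback along the diagonal and Clebsch--Gordan contraction, and Serre duality pairing with $\eta_f^\alpha$. Two small inaccuracies in the write-up are worth flagging. First, the paper does \emph{not} build the full syntomic diagram apparatus of Propositions \ref{P29} and \ref{P41} in this case; that machinery is what the diagonal picture is designed to avoid. Instead it applies Proposition \ref{Prop: BKlog2} directly, chooses the distinguished lift $\tilde{\omega}=[(0,\omega)]\in\Fil^{l+m+2}H^2(\MF_{\dR}(\CG^*_{r-k}))$ via \v{C}ech cochain representatives (the exact analogue of Corollary \ref{C525}, and the crux of the simplification), and then works at the cochain level in rigid cohomology to extract $[\CG^*_{r-k}(\beta)]$ from equation \eqref{BKlogbeta}. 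Second, the primitive $\beta_{\ord}$ is actually taken on $X^{\ord,2,\dagger}$, not on $X^{\ord,\dagger}$, and is the two-term expression $a(\varphi_1,\varphi_2)(F_g\otimes\omega_{h,\ord})+b(\varphi_1,\varphi_2)(\omega_{g,\ord}\otimes F_h)$; only after pullback along $\Delta_2$ and using that $p$-depleted sections are $\nabla$-exact does one land on $(1-\chi_f(p)p^{l+m+2}\varphi^2)(F_g\otimes\omega_{h,\ord})$, which after Clebsch--Gordan contraction is the nearly overconvergent form $\Xi_k(g,h)$ up to the Euler factors. These are presentation-level differences, not gaps; the mathematical content of your outline matches the paper's argument.
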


\begin{rem}
In \cite{Hsi2021}, the author pins down an \emph{optimal} construction of the $p$-adic $L$-functions $\sh{L}_p^{\mathbf{f}}(\mathbf{f}, \mathbf{g}, \mathbf{h})$ for a triple of primitive Hida families $(\mathbf{f},\mathbf{g},\mathbf{h})$ of respective tame levels $(N_1,N_2,N_3)$ under suitable assumptions, including the fact that the big Galois representations attached to $\mathbf{f}$ is absolutely irreducible and that $\gcd(N_1,N_2,N_3)$ is squarefree. We refer to Theorem A of loc. cit for the precise statement. Hsieh's construction relies on a suitable choice of \emph{test vectors} $(\breve{\mathbf{f}},\breve{\mathbf{g}},\breve{\mathbf{h}})$, which are Hida families of common level $N:=\mathrm{lcm}(N_1,N_2,N_3)$ with associated primitive families $(\mathbf{f},\mathbf{g},\mathbf{h})$. If one restricts to the $p$-ordinary case and, under Hsieh's assumptions, thinks of our $g$ (resp. $h$) as the specialization of $\breve{\mathbf{g}}$ (resp. $\breve{\mathbf{h}}$) in weight $l+2$ (resp. $m+2$) and assumes that $\breve{\mathbf{f}}=\mathbf{f}$ is the Hida family passing through $f$ in weight $k+2$, then the expression
\[
[\Gamma_0(N_1):\Gamma_0(N)]\cdot \frac{\big\langle e_{f^w_\alpha}(\Xi_k(g,h)),f^w_\alpha\big\rangle_{\mathrm{Pet}}}{\big\langle f^w_\alpha, f^w_\alpha\big\rangle_{\mathrm{Pet}}}
\]
essentially coincides with the evaluation of Hsieh's $\sh{L}_p^{\mathbf{f}}(\mathbf{f}, \mathbf{g},\mathbf{h})$ at the triple $(k+2,l+2,m+2)$ (cf. for instance, \cite[Proposition 3.6 and Remark 3.7]{Mar2026} for a more precise statement). Analogous considerations apply to the finite slope case, since the choice of test vectors only involves the behaviour of the three families at the primes dividing $N$ (hence away from $p$).
\end{rem}

\subsection{From de Rham to rigid cohomology}
\label{rigidsubsection}
The variety $X$ admits a smooth model $\Sh{X}$ defined over $\Z_p$, so it makes sense to consider the rigid cohomology of the special fibers of such models (always with log poles at the cusps), which admits canonical isomorphism to the de Rham cohomology of $X$. The same applies to $X^2$, with model $\Sh{X}^2$. 

\begin{lemma}
\label{lemma-rigid ext}
    The filtered Frobenius module $D_{\cris}(\tilde{E}_{k(f,g,h)})$ is realised as a Frobenius module in the mapping fibre of the Clebsch--Gordan pullback in rigid cohomology, with the filtration induced from the de Rham mapping fibre via the comparison between de Rham and rigid cohomology.
\end{lemma}

\begin{proof}
We have an isomorphism of short exact sequence of vector spaces as follows, where the top row is canonically equipped with a filtration, $\MF_{\rig}(\CG^*_{r-k})$ is the mapping fibre of the Clebsch--Gordan pullback of the rigid cohomology complexes, and the middle vertical arrow is the natural map induced on mapping fibres by the two specialisation maps from algebraic de Rham cohomology to rigid cohomology. (We choose \v{C}ech resolutions to work with concrete complexes such that the specialisation maps commute with the Clebsch--Gordan pullbacks, inducing the map on mapping fibres.)
\[\begin{tikzcd}
	0 & {H^1_{\dR}(Y,\Sh{V}^k(k-r))} & {H^2(\MF_{\dR}(\CG^*_{r-k}))} & {H^2_{\dR,c}(Y^2,\Sh{V}^{[l,m]})} & 0 \\
	0 & {H^1_{\rig}(X, \Sh{V}^k(k-r)\langle D_X\rangle)} & {H^2(\MF_{\rig}(\CG^*_{r-k}))} & {H^2_{\rig}(X^2, \Sh{V}^{[l,m]}\langle -D_{X^2}\rangle)} & 0
	\arrow[from=1-1, to=1-2]
	\arrow[from=1-2, to=1-3]
	\arrow["\simeq", from=1-2, to=2-2]
	\arrow[from=1-3, to=1-4]
	\arrow["\simeq", from=1-3, to=2-3]
	\arrow[from=1-4, to=1-5]
	\arrow["\simeq", from=1-4, to=2-4]
	\arrow[from=2-1, to=2-2]
	\arrow[from=2-2, to=2-3]
	\arrow[from=2-3, to=2-4]
	\arrow[from=2-4, to=2-5]
\end{tikzcd}\]

We now further restrict to the ordinary locus. As the rigid analytic ordinary locus $X^{\ord}$ admits a global lift of Frobenius, any natural representative of $\RG{\rig}(X^{\ord,\dagger}, \Sh{V}^k(k-r)\langle D_X \rangle)$ -- for example, a \v{C}ech resolution of the overconvergent de Rham complex with respect to a \v{C}ech cover given by tubes of a cover of $X^{\ord}_{\F{p}}$ -- is equipped with a canonical Frobenius. The same is true of $X^{\ord,2}$, and the Clebsch--Gordan pullback on such natural complexes commutes with the Frobenius. Hence, the mapping fibre 
\[
\MF_{\rig}(\CG^*_{r-k, \ord}) := \MF\left[ \RG{\rig}(X^{\ord, 2, \dagger}, \Sh{V}^{[l,m]}\langle -D_{X^2}\rangle) \to \RG{\rig}(X^{\ord, \dagger}, \Sh{V}^k(k-r)\langle D_X \rangle) \right]
\]
is also a mapping fibre in the category of Frobenius modules.

Taking the natural restriction map $\MF_{\rig}(\CG^*_{r-k}) \to \MF_{\rig}(\CG^*_{r-k,\ord})$, we get the following diagram.
\[\begin{tikzcd}
	0 & {H^1_{\dR}(Y,\Sh{V}^k(k-r))} & {H^2(\MF_{\dR}(\CG^*_{r-k}))} & {H^2_{\dR,c}(Y^2,\Sh{V}^{[l,m]})} & 0 \\
	0 & {H^1_{\rig}(X^{\ord,\dagger}, \Sh{V}^k(k-r)\langle D_X\rangle)} & {H^2(\MF_{\rig}(\CG^*_{r-k,\ord}))} & {H^2_{\rig}(X^{\ord,2,\dagger}, \Sh{V}^{[l,m]}\langle -D_{X^2}\rangle)} & 0
	\arrow[from=1-1, to=1-2]
	\arrow[from=1-2, to=1-3]
	\arrow[from=1-2, to=2-2]
	\arrow[from=1-3, to=1-4]
	\arrow[from=1-3, to=2-3]
	\arrow[from=1-4, to=1-5]
	\arrow[from=1-4, to=2-4]
	\arrow[from=2-1, to=2-2]
	\arrow[from=2-2, to=2-3]
	\arrow[from=2-3, to=2-4]
	\arrow[from=2-4, to=2-5]
\end{tikzcd}\]

Moreover, taking colimit over \v{C}ech covers, we may assume that the complexes $\RG{\dR}(Y, \Sh{V}^k(k-r))$ and \hfill \\
$\RG{\rig}(X^{\ord, \dagger}, \Sh{V}^k(k-r)\langle D_X \rangle)$ are equipped with Hecke actions, so that we can localise them at the Hecke eigensystem $\Pi^*_f$ defining $V_{\dR}^*(f)$. In doing so, we can replace the left column of the above diagram with $D_{\dR}(V^*(f)(-r-1)) \simeq D_{\cris}(V^*(f)(-r-1))$, and modify the mapping fibres accordingly, i.e. we take 
\[\MF\left[\RG{\dR,c}(Y^2, \Sh{V}^{[l,m]}) \to \RG{\dR}(Y, \Sh{V}^k(k-r))_{\Pi^*_f}\right]\] and similarly for the rigid complex. 

On the right hand column, the maps 
\[
H^2_{\dR,c}(Y^2, \Sh{V}^{[l,m]}) \simeq H^2_{\rig}(X^2, \Sh{V}^{[l,m]}\langle -D_{X^2} \rangle) \to H^2_{\rig}(X^{\ord, 2,\dagger}, \Sh{V}^{[l,m]}\langle -D_{X^2}\rangle)
\]
induce isomorphisms on the generalised Hecke eigenspace of $V_{\dR}(g) \otimes V_{\dR}(h)$. The last two rigid cohomology groups carry a Frobenius that commutes with the Hecke actions. Moreover, by the results of \cite{ABSV} or by Liebermann's trick, the rigid Frobenius on $H^2_{\rig}(X^2, \Sh{V}^{[l,m]}\langle -D_X^2\rangle)$ matches the crystalline Frobenius under the isomorphism $D_{\cris}(H^2_{\et,c}(Y^2, \Sh{V}^{[l,m]}) \simeq H^2_{\rig}(X^2, \Sh{V}^{[l,m]}\langle -D_{X^2}\rangle)$. As a result, pulling back via $V_{\dR}(g) \otimes V_{\dR}(h) \to H^2_{\dR,c}(Y^2, \Sh{V}^{[l,m]})$ we obtain an extension 
\[
0 \to D_{\dR}(V^*(f)(-r-1)) \to D(E) \to V_{\dR}(g) \otimes V_{\dR}(h) \to 0
\]
where $D_{\dR}(\tilde{E}_{k(f,g,h)}) \simeq D(E)$ as filtered $\Q_p$-vector spaces via the de Rham comparison of Lemma \ref{de rham comparison}. Moreover, $D(E)$ is also an extension in the category of Frobenius modules, and since $P_{gh}(\varphi)$ is invertible on $D_{\cris}(V^*(f)(-r-1))$, it is the unique such extension. In other words, $D_{\cris}(\tilde{E}_{k(f,g,h)}) \simeq D(E)$ as a filtered Frobenius module.
\end{proof}

\begin{rem}\label{rem 6.7}
    Using Besser's ``canonical" rigid complexes, which are equipped with a Frobenius, and such that the Frobenii commute with pullback, we could treat $\MF_{\rig}(\CG^*_{r-k})$ (and its $\Pi^*_f$ localised version) as a mapping fibre in the category of Frobenius modules. Note however that we don't claim that $D_{\cris}(H^2(\MF_{\proet}(\CG^*_{r-k})) \simeq H^2(\MF_{\rig}(\CG^*_{r-k}))$, although this would follow from the same argument as above after noticing that the Frobenius weights of $H^2_{\rig}(X^2, \Sh{V}^{[l,m]}\langle -D_{X^2}\rangle)$ and $H^1_{\rig}(X, \Sh{V}^k(k-r)\langle D_X \rangle)$ are distinct, and by the de Rham comparison of Lemma \ref{de rham comparison}. Alternatively, one can deduce this from the crystalline (in fact, semi-stable) comparison results of \cite{ABSV}.
\end{rem}

\subsection{Proof of the $p$-adic regulator formula}
The rest of this section is devoted to an alternative proof of Theorem \ref{theorem: diagonal regulator} which relies on the results of the previous sections.

Let $P_g(T)=\Big(1-\frac{T}{\alpha_g}\Big)\Big(1-\frac{T}{\beta_g}\Big)\in 1+T\cdot L[T]$ and define $P_h(T)$ analogously. Set 
\[
P_{gh}(T):= (P_g*P_h)(T)=\Big(1-\tfrac{T}{\alpha_g\alpha_h}\Big)\Big(1-\tfrac{T}{\alpha_g\beta_h}\Big)\Big(1-\tfrac{T}{\beta_g\alpha_h}\Big)\Big(1-\tfrac{T}{\beta_g\beta_h}\Big).
\]

The first step is to combine Proposition \ref{proposition: diagonal pullback}, Proposition \ref{Prop: BKlog2}, and Lemma \ref{lemma-rigid ext} to obtain the following corollary.

\begin{cor}
\label{corollary: BK1 formula diagonal}
Consider the extension \eqref{E10} and let $\tilde{\omega}\in D_{\dR}(\tilde{E}_{k(f,g,h)})$ be any preimage of 
\[
\omega:=\omega_g\otimes\omega_h\in\Fil^{l+m+2}D_{\dR}(V(g)\otimes V(h))
\]
Then, with this choice of $P=P_{gh}$ and $\omega$, the hypothesis of Proposition \ref{Prop: BKlog2} are satisfied and we have
\[
ev_\omega\big(\log_{\BK}(\mathrm{res}_p(\kappa(f,g,h)))\big)=\tilde{\omega}-P_{gh}(\varphi)^{-1}\big(P_{gh}(\varphi)\tilde{\omega}\big)
\]
as elements of $D_{\dR}(V^*(f)(-r-1))$.
\end{cor}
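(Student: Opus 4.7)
The plan is to apply Proposition \ref{Prop: BKlog2} essentially verbatim, with $V=V^*(f)(-r-1)$, $W=V(g)\otimes V(h)$, $n=l+m+2$, $\omega=\omega_g\otimes\omega_h$, $P=P_{gh}$, and any $K_0$-linear section $s_1:D_{\st}(W)\to D_{\st}(\tilde{E}_{k(f,g,h)})$ with $s_1(\omega)=\tilde{\omega}$. Once the hypotheses of that proposition are verified, the corollary is a direct restatement of its conclusion. The main additional input is Proposition \ref{proposition: diagonal pullback}, which realizes $\tilde{E}_{k(f,g,h)}$ by pullback/pushout inside the mapping fibre triangle \eqref{disttriang-diagonalpullback}; applying the functor $D_{\dR}$ (resp.\ $D_{\st}$) to the entire construction, together with the Faltings--Tsuji comparison, produces the analogous picture in the category of filtered $(\varphi,N)$-modules. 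In particular a preimage $\tilde{\omega}\in D_{\dR}(\tilde{E}_{k(f,g,h)})$ of $\omega$ exists and any such choice can be extended to a section $s_1$.

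There are then three hypotheses to check. First, $\omega\in\Fil^{l+m+2}D_{\dR}(W)\cap D_{\st}(W)$: the filtration statement is immediate from $\omega_g\in\Fil^{l+1}V_{\dR}(g)$ and $\omega_h\in\Fil^{m+1}V_{\dR}(h)$ via \eqref{fil0fil1}, while semistability is in fact crystallinity, since $p\nmid N$. Second, $P_{gh}(\varphi)\omega=0$: the Frobenius eigenvalues on $D_{\cris}(V(g))$ and $D_{\cris}(V(h))$ are $\{\alpha_g,\beta_g\}$ and $\{\alpha_h,\beta_h\}$ respectively, so by Cayley--Hamilton $P_g(\varphi)$ (resp.\ $P_h(\varphi)$) annihilates all of $D_{\cris}(V(g))$ (resp.\ $D_{\cris}(V(h))$); consequently $P_{gh}=P_g*P_h$ annihilates all of $D_{\cris}(V(g)\otimes V(h))$, in particular $\omega$. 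Third, $P_{gh}(\varphi)$ and $P_{gh}(p\varphi)$ are bijective on $D_{\st}(V^*(f)(-r-1)(l+m+2))$: on this module $\varphi$ has eigenvalues $\alpha_f^{-1}p^{r-l-m-1}$ and $\beta_f^{-1}p^{r-l-m-1}$, whose archimedean absolute values equal $p^{-(l+m+3)/2}$, whereas the roots $\gamma\delta$ of $P_{gh}$ have $|\gamma\delta|_\infty=p^{(l+m+2)/2}$ by the Weil bound, so no eigenvalue matches any root; the same argument treats $P_{gh}(p\varphi)$.

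The formula of the corollary is now a direct transcription of the formula in Proposition \ref{Prop: BKlog2}. It is manifestly independent of the choice of preimage $\tilde{\omega}$: if $\tilde{\omega}'$ is another preimage, then $v:=\tilde{\omega}'-\tilde{\omega}\in D_{\st}(V^*(f)(-r-1))$, and $v-P_{gh}(\varphi)^{-1}P_{gh}(\varphi)v=0$ by the bijectivity just established.

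I do not expect any serious technical obstacle: the real content has already been distilled into Proposition \ref{proposition: diagonal pullback} (the extension-class reinterpretation of the diagonal class via Clebsch--Gordan pullback) and Proposition \ref{Prop: BKlog2} (the abstract Bloch--Kato formula). What remains is bookkeeping together with the straightforward Weil-bound verification of bijectivity.
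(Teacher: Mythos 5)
Your proposal is correct and follows exactly the route the paper intends: the corollary is obtained by feeding the pullback description of $\tilde{E}_{k(f,g,h)}$ from Proposition \ref{proposition: diagonal pullback} into the abstract Bloch--Kato formula of Proposition \ref{Prop: BKlog2}, and the real work is verifying the hypotheses, which you carry out correctly via the Weil bound. The paper does not actually write out this verification --- the corollary is stated as a direct consequence --- so your write-up is more detailed than the source.

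Two small points worth flagging. First, you verify the hypotheses internal to Proposition \ref{Prop: BKlog2} but omit the standing \emph{Assumption} block just before it, which additionally requires $1-\varphi$ and $1-p\varphi$ to be bijective on $D_{\dR}(V\otimes_{\Q_p}W^{\vee})$; this is needed for the identification $H^1_e=H^1_g$ used there and is checked by the same Weil-bound estimate (the eigenvalues of $\varphi$ on $D_{\dR}\big(V^*(f)(-r-1)\otimes (V(g)\otimes V(h))^{\vee}\big)$ have archimedean absolute value $p^{-1/2}$, so are never $1$ or $p^{-1}$). Second, the paper's formula in the corollary is stated as an equality in $D_{\dR}(V^*(f)(-r-1))$, not merely modulo $\Fil^{l+m+2}$ as Proposition \ref{Prop: BKlog2} would literally give; the reason, recorded in the remark following the corollary, is that $\Fil^{l+m+2}D_{\dR}(V^*(f)(-r-1))=\Fil^{l+m+1-r}V^*_{\dR}(f)=0$ by the balanced condition. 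Your independence-of-$\tilde\omega$ observation is correct and complementary, but it is this vanishing of the filtration step that lets one drop the ``mod $\Fil^n$''; worth stating explicitly.
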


\begin{rem}
Note that the equality stated above holds already in $D_{\dR}(V^*(f)(-r-1))$, since one can easily check that $\Fil^{l+m+2-r-1}V_{\dR}^*(f)=(0)$, thanks to the balanced condition on $(k,l,m)$.
\end{rem}

The idea is now to make an explicit choice for $\tilde{\omega}$. In what follows, we write $\omega_{\dR}$ when we mean the class in de Rham cohomology and $\omega_{\coh}$ when we mean the corresponding section in 
\[
H^0(X^2,p_1^*\Sh{V}^l\otimes p_2^*\Sh{V}^m \otimes\Omega_{X^2}\langle -D_{X^2}\rangle)
\]
where (from now on in this section) $p_i\colon X^2\to X$ is the projection on the $i$-th component and $D_{X^2}=p_1^{-1}(D_X)\cup p_2^{-1}(D_X)$ is the boundary divisor for $Y^2\subset X^2$. 

Note that for any chosen \v{C}ech covering, $\omega_{\coh}$ defines a 2-cocycle in the \v{C}ech complex $\RG{\dR}(Y^2,\Sh{V}^{[l,m]})$ giving rise to the class $\omega_{\dR}$, and being a 2-form $\Delta_2^*(\omega_{\coh})=0$ already at the level of cochains. The same applies to the de Rham version of the map $\CG_{r-k}$ of Proposition \ref{proposition: diagonal pullback}. This proves the following lemma.

\begin{lemma}
With the above notation, consider the short exact sequence attached to the de Rham version of the distinguished triangle \eqref{disttriang-diagonalpullback}: 
\[
0 \to H^1_{\dR}(Y,\Sh{V}^k(k-r)) \to H^2(\MF_{\dR}(\CG^*_{r-k})) \to H^2_{\dR,c}(Y^2,\Sh{V}^{[l,m]})\to 0.
\]
Then a distinguished lift of the class $\omega_{\dR}$ to $H^2(\MF_{\dR}(\CG^*_{r-k}))$ is given by the class $[(0,\omega_{\coh})]$. Moreover, the class $[(0,\omega_{\coh})]$ lies in $\Fil^{l+m+2} H^2(\MF_{\dR}(\CG^*_{r-k}))$.
\begin{proof}
We only have to justify the last statement. Since the Clebsch--Gordan pullback on de Rham complexes respects Hodge filtration, the filtration on $\MF_{\dR}(\CG^*_{r-k})$ is necessarily given by the natural map
\[
\MF\left(\Fil^{i}\RG{\dR,c}(Y^2, \Sh{V}^{[l,m]}) \to \Fil^{i}\RG{\dR}(Y, \Sh{V}^k(k-r))\right) \to \MF_{\dR}(\CG^*_{r-k}), \quad i \in \Z.
\]
The claim now follows immediately from the fact that $\omega_{\dR}\in\Fil^{l+m+2}$.
\end{proof}
\end{lemma}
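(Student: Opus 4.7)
The plan is to deduce the filtration assertion directly from the definition of the Hodge filtration on the mapping fibre, taking for granted the first assertion which was just established.

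First I would observe that the map $\CG^*_{r-k}$ is strictly compatible with the Hodge filtrations on source and target. Indeed, in the factorisation $\CG^*_{r-k}=\CG_{r-k}\circ\Delta_2^*$, the pullback along the closed immersion $\Delta_2\colon Y\hookrightarrow Y^2$ is filtration-preserving, and the sheafified Clebsch--Gordan map $\CG_{r-k}\colon\Sh{V}^l\otimes\Sh{V}^m\to\Sh{V}^k(k-r)$ is purely horizontal (a morphism of vector bundles not interacting with the connection). Working with the dual BGG resolutions to represent both sides, one then sees that the Hodge filtration on $\MF_{\dR}(\CG^*_{r-k})$ is computed levelwise, so that
\[
\Fil^n\MF_{\dR}(\CG^*_{r-k})=\MF\!\big(\Fil^n\RG{\dR,c}(Y^2,\Sh{V}^{[l,m]})\xrightarrow{\CG^*_{r-k}}\Fil^n\RG{\dR}(Y,\Sh{V}^k(k-r))\big),
\]
exactly as in Corollary \ref{C525}.

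Next I would produce a distinguished filtered representative of $(0,\omega)$. The coherent section $\omega_{\coh}$ is by construction a global section of $p_1^*\omega_{\Sh{E}}^{l+2}\otimes p_2^*\omega_{\Sh{E}}^{m+2}(-D_{X^2})$, which sits in the top Hodge filtration step of $H^2_{\dR,c}(Y^2,\Sh{V}^{[l,m]})$. Hence the \v{C}ech representative $\omega$ attached to $\omega_{\coh}$ (namely, the section restricted to each product affine, with trivial higher \v{C}ech components) belongs to $\Fil^{l+m+2}\RG{\dR,c}^2(Y^2,\Sh{V}^{[l,m]})$. Since $\CG^*_{r-k}(\omega)=0$ already at the level of cochains — the pullback of any 2-form along $\Delta_2$ vanishes identically on the 1-dimensional curve $Y$ — the pair $(0,\omega)$ is a cocycle already in the filtered subcomplex, yielding the required class in $\Fil^{l+m+2}H^2(\MF_{\dR}(\CG^*_{r-k}))$.

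No real obstacle is expected; the only point worth spelling out carefully is the strict filtration compatibility of $\CG^*_{r-k}$, which follows from the explicit dual BGG description of the pullback developed at the end of Section \ref{S4}.
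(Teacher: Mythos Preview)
Your proof is correct and follows essentially the same approach as the paper: both arguments reduce to the fact that the Hodge filtration on the mapping fibre is computed levelwise from the filtrations on source and target of $\CG^*_{r-k}$, and then use that the representative $\omega$ lies in $\Fil^{l+m+2}$ with $\CG^*_{r-k}(\omega)=0$ already at the cochain level. The only minor remark is that you invoke \emph{strict} compatibility of $\CG^*_{r-k}$ with the Hodge filtration, whereas mere filtration-preservation suffices to define the levelwise filtration on the mapping fibre; your argument does not actually use strictness anywhere, so this is harmless.
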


We can thus let $\tilde{\omega}$ to be the class in $\Fil^{l+m+2}D_{\dR}(\tilde{E}_{\kappa(f,g,h)})$ obtained from $[(0,\omega_{\coh})]$. For this choice, the formula of Corollary \ref{corollary: BK1 formula diagonal} simplifies to
\[
ev_\omega\big(\log_{\BK}(\mathrm{res}_p(\kappa(f,g,h)))\big)=-P_{gh}(\varphi)^{-1}\big(P_{gh}(\varphi)\tilde{\omega}\big).
\]

Since by design $P_{gh}(\varphi)(\omega_{\dR})=0$, it follows that there is a unique class
\[
\alpha_{\rig}\in H^1_{\rig}(X,\Sh{V}^k(k-r)\langle D_X \rangle)\]
such that if $\alpha$ is any 1-cocycle in $\RG{\rig}(X,\Sh{V}^k(k-r)\langle D_X \rangle)$ representing $\alpha_{\rig}$, we have $[(\alpha,0)]=P_{gh}(\varphi)[(0,\omega_{\coh})]$. Here by abuse of notation we keep writing $\omega_{\coh}$ for the analytic 2-cocycle in $\RG{\rig}(X^2,\Sh{V}^{[l,m]}\langle -D_{X^2}\rangle )$ given by the image of the algebraic $\omega_{\coh}$ under the natural map from algebraic de Rham complex to analytic de Rham complex. Moreover, we keep the same notation for the coefficients, even if for rigid cohomology the latter are more precisely given by suitable overconvergent Frobenius isocrystals.

As explained in \cite[Section 4.3]{BSV2020a}, the class $\eta_{f,\rig}^\alpha$ (the rigid incarnation of $\eta_f^\alpha$) is the image under extension by zero of a class
\[\eta_{f,\ord}^{\alpha}\in H^1_{\rig,c}(X^{\ord,\dagger},\Sh{V}^k\langle -D_X\rangle)
\]
in the compactly supported rigid cohomology of the ordinary locus. By adjunction, it follows that it is enough to describe the restriction $\alpha_{\ord}$ of $\alpha_{\rig}$ to the ordinary locus, where the complexes computing rigid cohomology become more explicit, as the ordinary locus $X^{\ord,\dagger}$ is affinoid. Moreover, the action of rigid Frobenius is canonically defined already at the level of complexes.

A complex computing $\RG{\rig}(X^{\ord,2,\dagger},\Sh{V}^{[l,m]}\langle -D_{X^2}\rangle)$ can be directly obtained by taking overconvergent (on strict neighbourhoods of the ordinary locus) sections of the complex (here $\nabla_i$ is the connection on the $i$-th factor for $i\in\{1,2\}$)
\[
\Sh{V}^{[l,m]}( -D_{X^2})\xrightarrow{\nabla=\nabla_1\otimes 1+1\otimes\nabla_2}\Sh{V}^{[l,m]}( -D_{X^2})\otimes\Omega^1_{X^2}(\log D_{X^2})\xrightarrow{\nabla}\Sh{V}^{[l,m]}( -D_{X^2})\otimes\Omega^2_{X^2}(\log D_{X^2}).
\]

A similar (and simpler) description applies to $\RG{\rig}(X^{\ord,\dagger},\Sh{V}^k(k-r)\langle D_X\rangle)$ as a two-term complex.

\begin{rem}
\label{rem: ordinaryFrob}
The Clebsch--Gordan pullback over the ordinary loci
\[
CG^*_{r-k, \ord}\colon \RG{\rig}(X^{\ord,2,\dagger},\Sh{V}^{[l,m]}\langle -D_{X^2}\rangle)\to \RG{\rig}(X^{\ord,\dagger},\Sh{V}^k(k-r)\langle D_X\rangle)
\]
is explicitly defined at the level of the complexes described above, in such a way that it commutes with rigid Frobenius at the level of complexes. This implies that a Frobenius induced on $H^i(MF_{\rig}(CG^*_{r-k, \ord}))$ can be allowed to act \emph{component-wise}, i.e., if a class in $\tau\in H^2(MF_{\rig}(CG^*_{r-k, \ord}))$ is represented by $\tau=[(\tau_1,\tau_2)]$, then $\varphi(\tau)$ is represented by $[(\varphi(\tau_1),\varphi(\tau_2))]$. The discussion of \S \ref{rigidsubsection} shows that this \emph{choice} does not affect the outcome of the computation.
\end{rem}

The image of $\tilde{\omega}$ in $H^2(MF_{\rig}(CG^*_{r-k, \ord}))$ can be represented by $[(0,\omega_{\ord})]$, with $\omega_{\ord}$ being the restriction of the differential form $\omega_g\otimes\omega_h$ to the ordinary locus. In particular, we have (cf. Remark \ref{rem: ordinaryFrob}):
\[
[(\alpha_{\ord},0)]=P(\varphi)[(0,\omega_{\ord})]=[(0,P(\varphi)(\omega_{\ord}))].
\]
Looking at how the differential is defined on the mapping fiber, this means that there exist
\[
\beta\in\Gamma\big(X^{\ord,2,\dagger},\Sh{V}^{[l,m]}(-D_{X^2})\otimes\Omega^1_{X^2}(\log D_{X^2})\big),\qquad \gamma\in\Gamma\big(X^{\ord,\dagger},\Sh{V}^k(k-r)\big)
\]
such that
\begin{equation}
\label{eq: key}
P_{gh}(\varphi)(\omega_{\ord})=\nabla(\beta),\qquad \alpha_{\ord}=\nabla(\gamma)-\CG^*_{r-k}(\beta).
\end{equation}

The following chain of equalities shows that we are reduced to computing a pairing in rigid cohomology of the ordinary locus:
\begin{align*}
\log_{\BK}(\mathrm{res}_p(\kappa(f,g,h))&(\eta_f^{\alpha}\otimes\omega_g\otimes\omega_h\otimes e_{r+2})=\\
&=-\big\langle \eta_f^{\alpha}\otimes e_{r+2}\,,\,P_{gh}(\varphi)^{-1}(\alpha_{\rig}))\big\rangle_{\rig, Y}=\\
&=-\big\langle \eta_{f,\ord}^{\alpha}\otimes e_{r+2}\,,\,P_{gh}(\varphi)^{-1}(\alpha_{\ord})\big\rangle_{\rig, X^{\ord,\dagger}}=\\
&=\big\langle \eta_{f,\ord}^{\alpha}\otimes e_{r+2}\,,\,P_{gh}(\varphi)^{-1}([\CG^*_{r-k}(\beta)])\big\rangle_{\rig, X^{\ord,\dagger}}=\\
&=\frac{1}{\mathcal{E}_p(f,g,h)}\big\langle \eta_{f,\ord}^{\alpha}\otimes e_{r+2}\,,\,[\CG^*_{r-k}(\beta)]\big\rangle_{\rig, X^{\ord,\dagger}}.
\end{align*}

The first two equalities should be already clear. The third is a direct consequence of \eqref{eq: key}. For the last, note that $\mathcal{E}_p(f,g,h)$ coincides with the evaluation of $P_{gh}(p^{-1}T^{-1})$ at $T=\alpha_f/p^{r+2}$.

\begin{rem}
Looking at $X^2$ as rigid space, it is clear that the rigid Frobenius $\varphi$ on the rigid cohomology for this product arises as the product of partial Frobenii $\varphi_1$ and $\varphi_2$ on the two components. The same applies when restricting to two copies of the ordinary locus.
\end{rem}

We are thus looking for an explicit overconvergent primitive under $\nabla$ of $P_{gh}(\varphi)(\omega_{g,\ord}\otimes\omega_{h,\ord})$, with $\omega_{g,\ord}$ and $\omega_{h,\ord}$ being the restrictions to $X^{\ord,\dagger}$ of the differentials $\omega_g$ and $\omega_h$. 

As explained in \cite[Section 4.4]{BSV2020a}, $P_g(\varphi_1)(\omega_{g,\ord})=\nabla_1 F_g$ for an explicit overconvergent section $F_g$ (which is unique if $l>0$ and unique up to a constant if $l=0$). Similarly $P_g(\varphi_2)(\omega_{g,\ord})=\nabla_2 F_h$.

Following again loc. cit. (note that our $P_{gh}$ differs slightly from theirs, due to a different choice of normalization of twists), write
\[
P_{gh}(XY)=a(X,Y)\cdot P_g(X)+b(X,Y)\cdot P_h(Y)
\]
with $a(X,Y)=1-\big(\chi_f(p)/p^{l+m+2}\big)X^2Y^2+Y\cdot\, a_0(X,Y)$ and $b(X,Y)=X\cdot\, b_0(X,Y)$ for suitable polynomials $a_0(X,Y),b_0(X,Y)\in L[X,Y]$.

Then, using $\varphi=\varphi_1\cdot \varphi_2$ and the commutativity between connection and Frobenii, we have
\begin{align*}
P_{gh}(\varphi)(\omega_{g,\ord}\otimes\omega_{h,\ord})
= & a(\varphi_1,\varphi_2)(\nabla_1(F_g)\otimes\omega_{h,\ord})+b(\varphi_1,\varphi_2)(\omega_{g,\ord}\otimes\nabla_2(F_h))=\\
= & a(\varphi_1,\varphi_2)\big(\nabla(F_g\otimes\omega_{h,\ord})\big)+b(\varphi_1,\varphi_2)\big(\nabla(\omega_{g,\ord}\otimes F_h)\big)=\\
= & \nabla\big( a(\varphi_1,\varphi_2)(F_g\otimes\omega_{h,\ord})+b(\varphi_1,\varphi_2)(\omega_{g,\ord}\otimes F_h)\big).
\end{align*}
Thus, the section $\beta$ can be described explicitly as 
\[
\beta= a(\varphi_1,\varphi_2)(F_g\otimes\omega_{h,\ord})+b(\varphi_1,\varphi_2)(\omega_{g,\ord}\otimes F_h).
\]
We are left to compute $\CG^*_{r-k,\ord}(\beta)$. 
Arguing exactly in the same way as in loc. cit. (i.e., using the fact that sections with $p$-depleted $q$-expansion are exact over $X^{\ord,\dagger}$), we obtain that, as cohomology classes,
\[
\Delta_2^\ast(\beta)=\big(1-(\chi_f(p)/p^{l+m+2})\varphi^2\big)(F_g\otimes\omega_{h,\ord})\in H^1_{\rig}(X^{\ord,\dagger},\Sh{V}^l\otimes\Sh{V}^m\langle D_X\rangle).
\]
From this point, one can proceed in the proof of Theorem \ref{theorem: diagonal regulator} exactly as explained in \cite[pp. 1023-1024]{BSV2020a}, upon observing that, by definition (see the proof of Proposition \ref{proposition: diagonal pullback}), applying our $\CG_{r-k}$ is the same as taking cup product with the Bertolini--Seveso--Venerucci determinant element $\mathtt{Det}_{\mathbf{r}}$ and applying the dualities
\[
V^\nu\otimes (V^\nu)^\ast\to \Q_p
\]
on coefficients for $\nu\in\{l,m\}$. 

Let us just observe here that, in the formula of Theorem \ref{theorem: diagonal regulator}, the term $(1-\beta_f/p\alpha_f)$) comes from the evaluation of $1-(\chi_f(p)/p^{l+m+2})(pT)^{-2}$ at $T=\alpha_f/p^{r+2}$, while the factor $(1-\beta_f/\alpha_f)$ comes from the $p$-stabilization process.

This concludes our alternative proof of Theorem \ref{theorem: diagonal regulator}.

\section{Connection to syntomic cohomology}
\label{S6}
Consider the following commutative square of maps of chain complexes.
\[\begin{tikzcd}
	{\Fil^{l+m+2}\RG{\dR,c}(Y^2, \Sh{V}^{[l,m]})} & {\RG{\rig}(X^{\ord,2,\dagger}, \Sh{V}^{[l,m]}\langle -D_{X^2}\rangle)} \\
	{\Fil^{l+m+2}\RG{\dR}(Y, \Sh{V}^k(k-r))} & {\RG{\rig}(X^{\ord, \dagger}, \Sh{V}^k(k-r)\langle D_X\rangle)}
	\arrow["{P_{gh}(\varphi)}", from=1-1, to=1-2]
	\arrow["{\CG^*_{r-k}}", from=1-1, to=2-1]
	\arrow["{\CG^*_{r-k,\ord}}", from=1-2, to=2-2]
	\arrow["{P_{gh}(\varphi)}", from=2-1, to=2-2]
\end{tikzcd}\]

Our computations in the previous section can be rephrased as computing the connecting map of snake lemma associated with the following map of short exact sequences.
\[\begin{tikzcd}
	0 & {\Fil^{l+m+2}H^1_{\dR}(Y, \Sh{V}^k(k-r))} & {\Fil^{l+m+2}H^2(\MF_{\dR}(\CG^*_{r-k}))} & {\Fil^{l+m+2}H^2_{\dR}(Y^2, \Sh{V}^{[l,m]})} & 0 \\
	0 & {H^1_{\rig}(X^{\ord,\dagger}, \Sh{V}^k(k-r)\langle D_X\rangle)} & {H^2(\MF_{\rig}(\CG^*_{r-k,\ord}))} & {H^2_{\rig}(X^{\ord, 2, \dagger}, \Sh{V}^{[l,m]}\langle -D_{X^2}\rangle)} & 0
	\arrow[from=1-1, to=1-2]
	\arrow[from=1-2, to=1-3]
	\arrow["{P_{gh}(\varphi)}", from=1-2, to=2-2]
	\arrow[from=1-3, to=1-4]
	\arrow["{P_{gh}(\varphi)}", from=1-3, to=2-3]
	\arrow[from=1-4, to=1-5]
	\arrow["{P_{gh}(\varphi)}", from=1-4, to=2-4]
	\arrow[from=2-1, to=2-2]
	\arrow[from=2-2, to=2-3]
	\arrow[from=2-3, to=2-4]
	\arrow[from=2-4, to=2-5]
\end{tikzcd}\]

\begin{defn}
    Define finite polynomial complexes as
    \begin{align*}
        \RG{P_{gh}-\syn,c}^{\ord}(Y^2, \Sh{V}^{[l,m]}) &:= \MF\left[ \Fil^{l+m+2}\RG{\dR}(Y^2,\Sh{V}^{[l,m]}) \xrightarrow{P_{gh}(\varphi)} \RG{\rig}(X^{\ord,2,\dagger}, \Sh{V}^{[l,m]}\langle -D_{X^2}\rangle )\right] \\
        \RG{P_{gh}-\syn}^{\ord}(Y, \Sh{V}^k(k-r)) &:= \MF\left[\Fil^{l+m+2}\RG{\dR}(Y, \Sh{V}^k(k-r)) \xrightarrow{P_{gh}(\varphi)} \RG{\rig}(X^{\ord,\dagger}, \Sh{V}^k(k-r)\langle D_X\rangle )\right].
    \end{align*}
\end{defn}

\begin{cor}
    $P_{gh}(\varphi)\log_{\BK}(\tilde{E}_{k(f,g,h)})(\omega_g \otimes \omega_h)$ is the image of the class $[(\beta, \omega_g \otimes \omega_h)]$ under the syntomic pullback
    \[
    \CG^*_{r-k,\syn}: \RG{P_{gh}(\varphi)-\syn, c}^{\ord}(Y^2, \Sh{V}^{[l,m]}) \to \RG{P_{gh}(\varphi)-\syn}^{\ord}(Y, \Sh{V}^k(k-r)).
    \]
\end{cor}
\begin{proof}
    Follows from Lemma \ref{L202} (or more precisely Remark \ref{rem2.3}(2)).
\end{proof}

\begin{rem}
    In light of Remark \ref{rem 6.7} we could have also defined syntomic complexes without mentioning $\ord$ anywhere -- such complexes are the more traditional definitions of syntomic complexes. However, since for the purposes of finding concrete primitives one has to anyway restrict to the ordinary locus, the ordinary versions come up naturally.
\end{rem}
\printbibliography
\end{document}